\documentclass{amsart}

\usepackage{cite}

\usepackage{amsmath}
\usepackage{amstext}
\usepackage{amssymb}

\usepackage{amsthm}

\theoremstyle{plain}
\newtheorem{theorem}{Theorem}[section]
\newtheorem{lemma}[theorem]{Lemma}

\theoremstyle{definition}
\newtheorem{definition}{Definition}[section]
\newtheorem{example}{Example}[section]
\numberwithin{equation}{section}

\newcommand{\conj}[1]{\overline{#1}}
\DeclareMathOperator{\sgn}{sgn}

\DeclareMathOperator{\Rp}{Re}

\newcommand{\bp}{\mathcal{P}}

\begin{document}

\title[Approximation of Extremal Functions]%
{Uniform Approximation of Extremal Functions in Weighted Bergman Spaces}
\author{Timothy Ferguson}
\address{Department of Mathematics\\University of Alabama\\Tuscaloosa, AL}
\email{tjferguson1@ua.edu}
\thanks{Partially supported by RGC Grant RGC-2015-22 from the University of Alabama.}

\date{\today}

\begin{abstract}
We discuss approximation of extremal functions 
by polynomials
in the weighted Bergman spaces $A^p_\alpha$ where 
$-1 < \alpha < 0$ and $-1 < \alpha < p-2$. 
We obtain bounds on how close the approximation 
is to the true extremal function 
in the $A^p_\alpha$ and 
uniform norms.  We also discuss several 
results on the relation between the Bergman 
modulus of continuity of a function and how 
quickly its best polynomial approximants 
converge to it. 
\end{abstract}

\thanks{Thanks to Brendan Ames for a helpful 
discussion.}

\maketitle

\section{Introduction}

In this article we discuss uniform approximation of extremal functions 
in weighted Bergman spaces.  In general, we approximate these functions 
by solutions to extremal problems restricted to spaces of polynomials.  

\begin{definition}
For $1 < p < \infty$ and $-1 < \alpha < \infty$ 
we define the weighted Bergman space $A^p_\alpha$ to be the space of all 
analytic functions in $\mathbb{D}$ such that 
\[
\|f\|_{p,\alpha} = \left( \int_{\mathbb{D}} |f(z)|^p dA_\alpha(z) \right)^{1/p}
 < \infty,
\]
where $dA_\alpha = (\alpha + 1)\pi^{-1}(1- |z|^2)^\alpha \, dA(z)$ and 
$dA$ is Lebesgue measure.
\end{definition}

For $1 < p < \infty$, it is know that the dual of 
$A^p_\alpha$ is isomorphic to $A^q_\alpha$, where $1/p + 1/q = 1$. 
Also, if $\phi \in (A^p_\alpha)^*$ and $k \in A^q_\alpha$ correspond to each 
other, then 
$\|\phi\|_{(A^p_\alpha)^*} \leq \|k\|_{A^q_\alpha} 
   \leq C \|\phi\|_{(A^p_\alpha)^*}$, 
where 
$C$ is some constant depending of $p$ and $\alpha$. 

\begin{definition}
Let 
$k \in  A^q_\alpha$ be given, where $1 < q < \infty$ and 
$k$ is not identically $0$.  
Let 
$F \in A^p_\alpha$ be such that $\|F\| = 1$ and 
$\Rp \int_{\mathbb{D}} F \overline{k} \, dA_\alpha$ is as large 
as possible, where $1/p + 1/q = 1$.  
There is always a unique function $F$ with this property.
We say that $F$ is the extremal function for the integral 
kernel $k$, and also that $F$ is the extremal function for the 
functional $\phi$ defined by 
$\phi(f) = \int_{\mathbb{D}} f \overline{k} \, dA_\alpha$. 
\end{definition}
We do not usually discuss the case $p=2$ because in this case 
$F$ is a scalar multiple of $k$. 

It is known (see \cite{Clarkson}) that the spaces $A^p_\alpha$, since 
they are subspaces of $L^p$ spaces, are uniformly convex.  In 
\cite{tjf1}, general results are proven about approximating 
extremal functions in uniformly convex spaces, and a proof is 
given there of the well known fact that extremal functions are 
unique in uniformly convex spaces.  
See 
\cite{Beneteau_Khavinson_survey, 
Beneteau_Khavinson-selected_problems_classical_function} for
more information on extremal problems in spaces of 
analytic functions. 
See also \cite{Ryabykh, tjf:holder-bergman, 
Khavinson_McCarthy_Shapiro, 
Shapiro_Regularity-closest-approx} 
for more information on 
regularity questions related the extremal 
problems we discuss. 

\begin{definition}
Let $f \in A^p_\alpha$.  Suppose 
\[
\|f(e^{it} \cdot) + f(e^{-it} \cdot) - 2f(\cdot) \|_{p,\alpha} 
\le C |t|^{\beta}
\]
for some constant $C$.  
We then say that $f \in \Lambda^*_{\beta, A^p_\alpha}$.  Furthermore, we 
define $\|f\|_{\Lambda^*,\beta,A^p_\alpha}$ 
to be the infimum of the constants $C$ such 
that the above inequality holds. 
\end{definition}

We refer to functions in the $\Lambda^*$ classes 
as being (mean) Bergman-H\"{o}lder continuous 
(see \cite{tjf:holder-bergman}). 
We discuss several estimates that relate the mean Bergman-H\"{o}lder 
continuity of $A^p_\alpha$ functions to the minimum error in approximating 
these functions with polynomials of fixed degree.  We apply these results 
to obtain estimates for how close the solution of an extremal problem is 
to the solution to the problem with the same linear functional posed over 
the space of polynomials of degree at most $n$.  
By using inequalities related to 
uniform convexity 
due to Clarkson \cite{Clarkson} and Ball, Carlen and Lieb 
\cite{Ball_Carlen_Lieb}, we are able to obtain quantitative estimates for 
distance from approximate extremal functions to the true extremal 
functions.

The estimates just mentioned are all in the $A^p_\alpha$ norm.  
However, our goal is to 
approximate (in certain cases) extremal functions in the 
uniform norm (i.e. the $L^\infty$ norm). 
To do so, we use results from \cite{tjf:holder-bergman} to obtain bounds 
on the $C^{\beta}$ norm of the extremal functions and the 
functions approximating them for certain $\beta$, 
as long as the integral kernels are sufficiently regular.   
We also use
Theorem \ref{thm:holder_bergman_uniform_bound}, which allows us to 
conclude that two functions that are each not too large in the $C^\beta$ 
norm and that are close in the $A^p_\alpha$ norm must actually be 
close in the uniform norm.  In stating the theorems, we do not aim for the 
most general estimates possible; however, the estimates we state do apply to 
the case where $k$ is a polynomial, or even in $C^2(\overline{\mathbb{D}})$. 

We note that 
in \cite{Khavinson_Stessin}, Khavinson 
and Stessin derive
H\"{o}lder regularity results for 
extremal problems in unweighted Bergman spaces,
However, they do not state explicit bounds on the 
exponent $\beta$ or on the $C^\beta$ norm of the 
extremal function, so we cannot use their result 
to get explicit bounds on extremal functions.  

The following lemma about the uniform convexity of $L^p$ will be needed.  
The inequality for $1 < p \leq 2$ can be 
proved from Theorem 1 in \cite{Ball_Carlen_Lieb}. 
The other inequality follows from 
equation (3) in Theorem 2 in \cite{Clarkson}. 

\begin{lemma}\label{lemma:sharp_uniform_convexity}
Let $\|f\|_p = \|g\|_p = 1$ and $(1/2)\|f + g\|_p > 1 - \delta$.  Let 
$\|f - g\|_p = \epsilon$.  
If $1< p \leq 2$ then $\epsilon < \sqrt{\frac{8}{p-1}} \delta^{1/2}$.  If 
$p \geq 2$ then $\epsilon < 2 p^{1/p} \delta^{1/p}$.  
\end{lemma}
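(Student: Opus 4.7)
The plan is to apply each of the two cited convexity inequalities to $\|f\|_p = \|g\|_p = 1$, rearrange to bound $\|(f-g)/2\|_p$ in terms of $\|(f+g)/2\|_p$, and then convert the hypothesis $\frac{1}{2}\|f+g\|_p > 1-\delta$ into the desired explicit rate.

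For the range $p \ge 2$, I would invoke Clarkson's inequality
\[
\left\|\tfrac{f+g}{2}\right\|_p^p + \left\|\tfrac{f-g}{2}\right\|_p^p \le \tfrac{1}{2}(\|f\|_p^p + \|g\|_p^p) = 1.
\]
Using the assumption $\|(f+g)/2\|_p > 1-\delta$, this gives $\|(f-g)/2\|_p^p < 1-(1-\delta)^p$. The only non-mechanical step is dominating $1-(1-\delta)^p \le p\delta$, which follows from convexity of $t\mapsto t^p$ (equivalently, from the mean value theorem applied to $t\mapsto (1-t)^p$ on $[0,\delta]$, since the derivative is bounded in modulus by $p$). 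Taking $p$-th roots and multiplying by $2$ yields $\epsilon < 2p^{1/p}\delta^{1/p}$.

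For the range $1 < p \le 2$, I would use the sharp 2-uniform convexity inequality of Ball--Carlen--Lieb,
\[
\left\|\tfrac{f+g}{2}\right\|_p^2 + (p-1)\left\|\tfrac{f-g}{2}\right\|_p^2 \le \tfrac{1}{2}(\|f\|_p^2 + \|g\|_p^2) = 1.
\]
As before, the hypothesis forces $(p-1)\|(f-g)/2\|_p^2 < 1-(1-\delta)^2 = 2\delta - \delta^2 \le 2\delta$. Taking square roots and multiplying by $2$ gives $\epsilon < \sqrt{8/(p-1)}\,\delta^{1/2}$.

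There is no substantive obstacle here; both cases are one-line consequences of the cited inequalities combined with the elementary bound $1-(1-\delta)^r \le r\delta$ for $\delta \in [0,1]$ and $r\ge 1$. The only point to note is that the strict inequalities in the conclusion come directly from the strict inequality assumed in the hypothesis, since the convexity inequalities themselves are non-strict.
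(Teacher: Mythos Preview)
Your proposal is correct and follows exactly the route the paper indicates: it simply cites Clarkson for $p\ge 2$ and Ball--Carlen--Lieb for $1<p\le 2$ without spelling out the details, and your argument is precisely the natural unpacking of those citations. One cosmetic remark: the general Ball--Carlen--Lieb inequality is usually stated with $\bigl((\|f\|_p^p+\|g\|_p^p)/2\bigr)^{2/p}$ on the right rather than $(\|f\|_p^2+\|g\|_p^2)/2$, but since you immediately specialize to $\|f\|_p=\|g\|_p=1$ the two coincide and nothing is affected.
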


\section{Mean Holder Continuity and Best Polynomial Approximation}
In this section we discuss several results relating mean 
H\"{o}lder continuity of functions to their distance from the space of 
polynomials of degree at most $n$.  Some of these results are used in the 
rest of the paper.  
The proofs of these results 
are similar to the proofs for similar results about 
classical H\"{o}lder continuity that can be found in \cite{Zygmund}, Volume 1, starting on 
p.\ 115.

\begin{definition}
Let $f \in A^p_\alpha$.  We define 
\[
E_n^{p,\alpha}(f) = \min \{\|f - P\|_{p,\alpha}: \text{ $P$ is a polynomial 
of degree at most $n$}\}.
\]
\end{definition}

\begin{theorem}
Let $0< \beta < 1$.  
Suppose that $\|f\|_{\Lambda^*, \beta, A^p_\alpha} = M$.  Let 
\[
A_\beta = \frac{2^{1+\beta}}{\pi} \int_0^\infty |\cos(t) - \cos(2t)|t^{\beta - 2} \, dt.
\]
Then 
\[
E_n^{p,\alpha}
\leq A_{\beta} n^{-\beta}    \|f\|_{\Lambda^*, \beta, A^p_\alpha}.
\]
\end{theorem}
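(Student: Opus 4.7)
The plan is to construct an explicit polynomial $P_n$ of degree $\le n$ by an integral transform of $f$, and to estimate $\|f - P_n\|_{p,\alpha}$ via Minkowski's integral inequality together with the hypothesis $\|D_t f\|_{p,\alpha} \le M|t|^\beta$ on the symmetric second difference $D_t f(z) := f(e^{it}z) + f(e^{-it}z) - 2f(z)$. The crucial point is that the rotation $f \mapsto f(e^{it}\cdot)$ is an $A^p_\alpha$-isometry, so integrals may be moved inside the norm with no loss.

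Concretely, for an even, real, integrable kernel $\phi_n$ on $\mathbb{R}$, I set
\[
P_n(z) := f(z) - \int_{-\infty}^\infty \phi_n(t)\, D_t f(z)\, dt.
\]
Writing $f(z) = \sum a_k z^k$ gives $D_t f(z) = \sum a_k z^k(2\cos kt - 2)$, so the coefficient of $z^k$ in $P_n$ is $a_k\bigl[1 + 2\int \phi_n(t)(1-\cos kt)\,dt\bigr]$. The goal is to choose $\phi_n$ so that this bracket vanishes for every integer $k > n$. The explicit form of $A_\beta$, featuring $\cos t - \cos 2t$, points toward the kernel
\[
\phi_n(t) = C_n\,\frac{\cos(nt/2) - \cos(nt)}{t^2},
\]
with $C_n$ proportional to $1/n$. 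Using the classical identity $\int_{-\infty}^\infty (1 - \cos st)/t^2\, dt = \pi|s|$ and product-to-sum formulas, one evaluates $\int \phi_n(t)(1 - \cos kt)\, dt$ in closed form; after choosing $C_n$ so that this integral equals $-1/2$ for all $k \ge n$, it follows that $P_n$ is actually a polynomial of degree less than $n$.

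With the kernel fixed, Minkowski's integral inequality yields
\[
\|f - P_n\|_{p,\alpha} \le \int |\phi_n(t)|\, \|D_t f\|_{p,\alpha}\, dt \le M \int_{-\infty}^\infty |\phi_n(t)|\,|t|^\beta\, dt.
\]
The remaining integral is handled by the substitution $u = nt/2$: this turns $\cos(nt/2) - \cos(nt)$ into $\cos u - \cos 2u$, extracts a factor of $n^{-\beta}$, and leaves an absolute integral that matches the definition of $A_\beta$ up to the prefactor $2^{1+\beta}/\pi$.

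The main technical hurdle is the case analysis of $\int (\cos(nt/2) - \cos(nt))(1 - \cos kt)/t^2\, dt$ as $k$ ranges over the intervals $[0, n/2]$, $[n/2, n]$, and $[n, \infty)$: the polynomial character of $P_n$ rests on this integral being identically zero on the first interval and equal to a suitable constant on the third, with a linear transition in between. Once these Fourier identities are verified, the remainder of the argument is routine and modeled on the classical Jackson-type estimates for the Zygmund class in Zygmund's treatise.
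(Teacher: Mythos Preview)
Your approach is essentially the paper's: the polynomial $P_n$ you build (with $C_n=-\tfrac{1}{\pi n}$) is exactly the de la Vall\'ee--Poussin mean $\tau_m = 2\sigma_{2m-1}-\sigma_{m-1}$ for $m=n/2$, and the kernel identity you propose to verify is the integral representation \eqref{eq:tau_m} that the paper simply quotes from Zygmund before applying the identical Minkowski step. The only cosmetic difference is that the paper works with $\tau_m$ of degree $2m-1$ and then passes from $E_{2m-1}$ to $E_{2m}$, whereas you scale the kernel to hit degree $n$ directly.
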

\begin{proof}
Let $M = \|f\|_{\Lambda^*, \beta, A^p_\alpha}.$ 
Let $f_{|r}$ represent the function $f$ restricted to the circle of 
radius $r$. 
Let $T_n$ be the best polynomial approximant of $f$, let 
$R_n = f - T_n$ be the remainder and let $\rho_k$ be the $k^{\text{th}}$ 
Ces\`{a}ro sum of the remainder.  
Let $K_m$ be the the Fej\'{e}r kernel for the 
$m^{th}$ Ces\`{a}ro sum.  Then $K_n$ has $L^1$ norm of $1$, and Young's 
inequality for convolutions shows that 
$M_p(r,\rho_k) = \|R_{n|r} * K_k\|_p \leq M_p(r,R_n) \|K_k\|_1 = 
M_p(r,R_n)$.  Let $\sigma_k$ be the $k^{\text{th}}$ Ces\`{a}ro sum 
of $f$.  
From \cite[eq. (13.4), p.\ 115, Volume 1]{Zygmund} we see that
\[
\left(1 + \frac{n}{h}\right) \sigma_{n+h-1} - \frac{n}{h}\sigma_{n-1} = 
T_n + \left(1 + \frac{n}{h}\right) \rho_{n+h-1} - \frac{n}{h} \rho_{n-1}.
\]
Using this equation
with $h=n$, subtracting $f$ from both sides and using
the fact that $M_p(r,\rho_k) \leq M_p(r,R_n)$ %
shows that 
$M_p\big[r,(2\sigma_{2n-1} - \sigma_{n-1}) - f\big] \leq 4 M_p(r,R_n)$.  
Multiply by $(\alpha + 1)2r(1-r^2)^\alpha$ 
and integrate $r$ from $0$ to $1$ to see that 
\[
\|(2\sigma_{2n-1} - \sigma_{n-1}) - f\|_{A^p_\alpha} \leq 
  4 \|R_n\|_{A^p_\alpha}.
\]

Let $\tau_n = 2\sigma_{2n-1}-\sigma_{n-1}$. 
Now 
\begin{equation}\label{eq:tau_m}
\tau_m(re^{ix}) - f(re^{ix}) = 
   \frac{2}{\pi} \int_0^\infty \left[ f(re^{i[x+(t/m)]}) + 
                    f(re^{i[x-(t/m)]}) - 2f(re^{ix}) \right] \frac{h(t)}{t^2} 
\, dt
\end{equation}
where $h(t) = (\cos(t) - \cos(2t))/2$. 
Apply Minkowski's inequality to see that 
\[
E^{p,\alpha}_{2m-1} \leq 
\| \tau_m(re^{ix}) - f(re^{ix}) \|_{p,\alpha} \leq 
\frac{2}{\pi} \int_0^\infty t^\beta M m^{-\beta} \frac{|h(t)|}{t^2} \, dt
= A_\beta M (2m)^{-\beta}.
\]
Since $E^{p,\alpha}_{2m} \leq E^{p,\alpha}_{2m-1}$, the 
theorem follows. 
\end{proof}

We can also prove the following theorem. 
\begin{theorem}\label{thm:approx_nderiv}
Let $K \geq 0$ be an integer. 
Suppose that $\|D^K_\theta f(re^{i\theta})\|_{p,\alpha} \leq M$.  
Let 
\[C_k = \frac{4}{\pi} \int_0^\infty |H_k(t)| \, dt\]
where 
\[
H_0(t) = h(t)/t^2, \qquad H_k(t) = \int_t^\infty H_{k-1}(x) \, dx.
\]
Then 
$E_n^{p,\alpha} \leq 2^K C_K M n^{-K}$.   
\end{theorem}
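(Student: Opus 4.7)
The plan is to mirror the proof of the previous theorem using the polynomial $\tau_m = 2\sigma_{2m-1} - \sigma_{m-1}$ and the identity (\ref{eq:tau_m}). Whereas the previous proof estimated the second difference $g(s) = f(re^{i(x+s)}) + f(re^{i(x-s)}) - 2f(re^{ix})$ via mean H\"{o}lder continuity, here I would integrate by parts $K$ times in the variable $t$ in order to transfer all $K$ angular derivatives from $f$ onto the kernel. Since $H_j' = -H_{j-1}$ by construction, each integration by parts replaces $H_{j-1}(t)\,dt$ with $-d H_j(t)$, and the chain rule applied to $g(t/m)$ supplies a factor $1/m$. The goal is the clean identity
\[
\tau_m - f = \frac{2}{\pi m^K} \int_0^\infty g^{(K)}(t/m)\, H_K(t)\, dt.
\]

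The hard part will be showing that every boundary term from the $K$ integrations by parts vanishes. At $t = \infty$ this is clear because $H_j(t) \to 0$. At $t = 0$ the $j$-th step produces a boundary contribution proportional to $g^{(j-1)}(0) H_j(0)$; using that $g$ is an even function of $s$ with $g(0) = 0$, the quantity $g^{(j-1)}(0)$ vanishes whenever $j-1$ is zero or odd, which handles $j = 1, 2, 4, 6, \ldots$. For odd $j \ge 3$ one has $g^{(j-1)}(0) = 2 D^{j-1}_\theta f(re^{ix}) \ne 0$ in general, so the argument relies on the companion identity $H_j(0) = 0$ for odd $j \ge 3$. I would verify this by a cut-off Fubini calculation: the iterated integral $\int_0^Y H_{j-1}(x)\,dx$ unfolds into a polynomial moment of $\cos z - \cos 2z$ plus a boundary correction from cutting the $z$-integral off at $Y$, and oscillatory cancellation between the two pieces sends their sum to $0$ as $Y \to \infty$ exactly when $j - 3$ is even.

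With the boundary terms eliminated, taking $L^p_\alpha$-norms in the clean identity and applying Minkowski's integral inequality yields
\[
\|\tau_m - f\|_{p,\alpha} \le \frac{2}{\pi m^K} \int_0^\infty \|g^{(K)}(t/m)\|_{p,\alpha}\, |H_K(t)|\, dt \le \frac{C_K M}{m^K},
\]
where the second inequality uses $g^{(K)}(s) = D^K_\theta f(re^{i(x+s)}) + (-1)^K D^K_\theta f(re^{i(x-s)})$ together with the rotation invariance of the Bergman norm to bound $\|g^{(K)}(t/m)\|_{p,\alpha} \le 2M$. Since $\tau_m$ is a polynomial of degree $2m - 1$, this gives $E^{p,\alpha}_{2m-1}(f) \le C_K M / m^K$; choosing $m = \lceil (n+1)/2 \rceil$ and invoking the monotonicity of $E^{p,\alpha}_n$ in $n$ then yields the claimed bound $E^{p,\alpha}_n(f) \le 2^K C_K M\, n^{-K}$.
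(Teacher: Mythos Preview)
Your proposal is correct and follows essentially the same route as the paper: integrate by parts $K$ times in \eqref{eq:tau_m} to pass the angular derivatives onto the kernel, then apply Minkowski's inequality and the rotation invariance of the norm. The paper's proof simply asserts the integrated-by-parts identity without discussing boundary terms, so your explicit treatment of them---in particular the observation that one needs $H_j(0)=0$ for odd $j\ge 3$, and the cut-off/oscillatory-cancellation argument that verifies it---actually supplies detail that the paper omits.
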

\begin{proof}
Let $\displaystyle f^{(n,\theta)}(re^{i\theta}) = 
  \frac{\partial^n}{\partial \theta^n} f(re^{i\theta})$.  
Then integrating by parts in equation \eqref{eq:tau_m} shows that 
\[
\tau(re^{ix}) - f(re^{ix}) = 
   \frac{2}{\pi m^K} \int_0^\infty \left[ 
  f^{(k,\theta)}(re^{i[x+(t/m)]}) + 
 (-1)^K f^{(K,\theta)}(re^{i[x-(t/m)]})\right] 
       H_K(t) \, dt
\]
Applying Minkowski's inequality shows that 
\[
\|\tau_m(x) - f(x)\|_{p,\alpha} \leq C_K m^{-K} \|D_\theta^n f\|_{p,\alpha}.
\]
As above, 
this implies that 
$E_n^{p,\alpha} \leq C_K 2^K M n^K$. 
\end{proof}

Define the $A^p_\alpha$ modulus of continuity for $f$ by
\[
\omega_{p,\alpha}(\delta, f) = \sup_{t \leq \delta} \|f(e^{it}z) - f(z)\|_{p,\alpha}. \]
\begin{theorem}
Let $k \geq 0$ be an integer. 
Suppose  $D_\theta^k f$ has modulus of continuity $\omega_{p,\alpha}(\delta)$.  
Then 
\[
E_n^{p,\alpha}(f) \leq B_k \omega_{p,\alpha}\left(\frac{2\pi}{n}\right) n^{-K}
\]
where $B_k = 2^{K} C_{K+1}/\pi + 2^K C_K$. 
\end{theorem}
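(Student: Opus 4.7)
The plan is to mirror the proof of Theorem \ref{thm:approx_nderiv}, but to replace the bound on $\|D^K_\theta f\|_{p,\alpha}$ with one using the modulus of continuity of $g := D^K_\theta f$. Start from equation \eqref{eq:tau_m} applied to $\tau_m = 2\sigma_{2m-1} - \sigma_{m-1}$ and integrate by parts $K$ times to obtain
\[
\tau_m(re^{ix}) - f(re^{ix}) = \frac{2}{\pi m^K}\int_0^\infty\bigl[g(re^{i[x+t/m]}) + (-1)^K g(re^{i[x-t/m]})\bigr] H_K(t)\,dt.
\]
Rewrite the bracketed quantity as $\bigl[g(re^{i[x+t/m]}) - g(re^{ix})\bigr] + (-1)^K\bigl[g(re^{i[x-t/m]}) - g(re^{ix})\bigr] + (1+(-1)^K)g(re^{ix})$. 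Each difference has $A^p_\alpha$-norm at most $\omega_{p,\alpha}(t/m)$; the residual term integrates against $H_K\,dt$ to a multiple of $g(re^{ix})H_{K+1}(0)$, which vanishes for $K$ odd by parity and for $K\geq 2$ even because $H_{K+1}(0) = \int_0^\infty H_K(t)\,dt$ reduces by iterated integration by parts to an Abel-summable oscillatory integral proportional to $\int_0^\infty t^{K-2}(\cos t - \cos 2t)\,dt$, which equals zero. The case $K = 0$ uses \eqref{eq:tau_m} directly without integration by parts. In all cases, Minkowski's inequality yields
\[
\|\tau_m - f\|_{p,\alpha} \leq \frac{4}{\pi m^K}\int_0^\infty \omega_{p,\alpha}(t/m)\,|H_K(t)|\,dt.
\]

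The crux is now to estimate this last integral. Split the $t$-integral at $t=\pi$. On $[0,\pi]$, the monotonicity of $\omega_{p,\alpha}$ gives a bound by $\omega_{p,\alpha}(\pi/m)\int_0^\pi |H_K(t)|\,dt \leq (\pi/4)C_K\,\omega_{p,\alpha}(\pi/m)$, which contributes the $2^K C_K$ piece of $B_K$. On the tail $(\pi,\infty)$ one cannot use the naive subadditivity inequality $\omega_{p,\alpha}(t/m) \leq (1+t/\pi)\omega_{p,\alpha}(\pi/m)$, because $|H_K(t)| = O(1/t^2)$ and hence $\int_0^\infty t|H_K(t)|\,dt$ diverges. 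Instead, integrate by parts using $H_{K+1}'(t) = -H_K(t)$ and $H_{K+1}(\infty)=0$, exploiting the oscillation of $H_K$ to trade $|H_K|$ for $|H_{K+1}|$; this yields the $2^K C_{K+1}/\pi$ piece, with the boundary term at $t=\pi$ being absorbed into the main contribution from $[0,\pi]$.

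Finally, take $n = 2m - 1$, so that $\tau_m$ has degree at most $n$ and $E^{p,\alpha}_n(f) \leq \|\tau_m - f\|_{p,\alpha}$. Combined with $\omega_{p,\alpha}(\pi/m) \leq \omega_{p,\alpha}(2\pi/n)$ and $m^{-K}\leq 2^K n^{-K}$, this produces the claimed inequality $E_n^{p,\alpha}(f) \leq B_K\,\omega_{p,\alpha}(2\pi/n)\,n^{-K}$. I expect the main technical obstacle to be the rigorous execution of the tail integration by parts: since the modulus-of-continuity hypothesis does not provide any pointwise smoothness of $g$, one must either interpret the boundary/bulk decomposition in an $A^p_\alpha$-valued Bochner-integral sense or carry out the argument first for smooth $g$ and then extend by a density argument.
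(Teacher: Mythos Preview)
Your approach is genuinely different from the paper's, and the technical obstacle you flag at the end is in fact the essential gap. The paper never tries to estimate $\int_0^\infty \omega_{p,\alpha}(t/m)\,|H_K(t)|\,dt$ directly. Instead it introduces the Steklov mean $f_\delta(z)=\frac{1}{2\delta}\int_{-\delta}^{\delta} f(e^{it}z)\,dt$ and splits $f=f_\delta+g$. The point of this device is that $D_\theta^{K+1}f_\delta$ actually exists in $A^p_\alpha$, with $\|D_\theta^{K+1}f_\delta\|_{p,\alpha}\le \delta^{-1}\omega_{p,\alpha}(2\delta;D_\theta^K f)$, while $\|D_\theta^K g\|_{p,\alpha}\le \omega_{p,\alpha}(\delta;D_\theta^K f)$. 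One then applies Theorem~\ref{thm:approx_nderiv} twice: to $f_\delta$ with $K+1$ derivatives (this produces the $2^K C_{K+1}/\pi$ term) and to $g$ with $K$ derivatives (this produces the $2^K C_K$ term). Setting $\delta=2\pi/n$ balances the two pieces and gives $B_K$ exactly.

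Your tail integration by parts, by contrast, cannot be carried out under the stated hypothesis. Writing $G(t)=\bigl[g(\cdot\,e^{it/m})-g(\cdot)\bigr]+(-1)^K\bigl[g(\cdot\,e^{-it/m})-g(\cdot)\bigr]$ as an $A^p_\alpha$-valued function and integrating $\int_\pi^\infty G(t)H_K(t)\,dt$ by parts requires either that $G$ be differentiable in $t$ (equivalently that $D_\theta^{K+1}f$ exist, which is not assumed) or that $G$ be of bounded variation (which a general modulus of continuity does not give). The density argument does not rescue this: for smooth approximants $f_\epsilon$ the bound you obtain after the extra integration by parts involves $\|D_\theta^{K+1}f_\epsilon\|_{p,\alpha}$, which blows up as $\epsilon\to 0$ and is not controlled by $\omega_{p,\alpha}(\cdot\,; D_\theta^K f)$. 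In effect, the Steklov decomposition is precisely the mechanism that manufactures a function with one extra derivative whose size \emph{is} controlled by $\omega$, and that is the idea missing from your plan.
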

Let 
$f_\delta(z) = \frac{1}{2\delta} \int_{-\delta}^{\delta} f(e^{it}z) \, dt$. 
Note that $D_\theta f_\delta = (D_\theta f)_\delta$. 
Minkowski's inequality shows that 
$\|f_\delta - f\|_{p, \alpha} \leq \omega_{p,\alpha}(\delta, f)$.  Let 
$f = f_\delta + g$.  Then using the fundamental theorem of calculus, we 
see that 
\[
\|D_\theta^{K+1} f_\delta\|_{p,\alpha} = 
\frac{\|D_\theta^K f(ze^{i\delta}) - D_\theta^K f(ze^{-i\delta})\|_{p,\alpha}}{2\delta}
\leq 2\delta^{-1} \omega_{p,\alpha}(2\delta; D_\theta^K f) 
\]
Also $\|D_\theta^Kg\|_{p,\alpha} \leq \omega_{p,\alpha}(\delta,D_\theta^K f)$. 
Thus by Theorem \ref{thm:approx_nderiv}, 
\[
E_n^{p,\alpha}(f) \leq 2^{K+1} C_{K+1}  n^{-(K+1)} (2\delta)^{-1} 
   \omega_{p,\alpha}(2\delta,D_\theta^K f) 
+ 2^K C_K n^{-K} \omega(\delta; D_\theta^K f).
\]
Taking the supremum over $|t| < \delta$ in the inequality
\[
\|f(\cdot)-f(e^{-2it}\cdot)\|_{p,\alpha} \leq
\|f(\cdot)-f(e^{-it}\cdot)\|_{p,\alpha} + 
   \|f(e^{-it}\cdot)-f(e^{-2it}\cdot)\|_{p,\alpha}
\]
shows that $\omega_{p,\alpha}(2\delta,f) \leq 2 \omega_{p,\alpha}(\delta,f)$. 
Thus
\[
E_n^{p,\alpha}(f) \leq 2^{K+1} C_{K+1}  n^{-(K+1)} \delta^{-1} 
   \omega_{p,\alpha}(\delta,D_\theta^K f) 
+ 2^K C_K n^{-K} \omega(\delta; D_\theta^K f).
\]
Now choose $\delta = 2\pi/n$ to see that 
\[
E_n^{p,\alpha}(f) \leq B_k \omega_{p,\alpha}\left(\frac{2\pi}{n}\right) n^{-K}
\]
where $B_k = 2^{K} C_{K+1}/\pi + 2^K C_K$.

From this it follows that if $f \in \Lambda^*_{\beta,A^p_\alpha}$ for 
$0 < \beta < 1$ then 
$E_n^{p,\alpha}(f) 
  \leq (2\pi)^\beta B_0 \|f\|_{\Lambda^*,\beta, A^p_\alpha} n^{-\beta}$. 

\begin{theorem}
Suppose that $f^{(\theta,K)} \in \Lambda^*_{1,A^p_\alpha}$ and that 
$\| f^{(\theta,K)}\|_{\Lambda^*,1,A^p_\alpha} = M$.
Then 
$E_n^{p,\alpha}(f) \leq \widetilde{B_k} M n^{-K-1}$ where 
\[
\widetilde{B_k} = 2^K (C_{k+2}/\pi + \pi C_k). 
\]
\end{theorem}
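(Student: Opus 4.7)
I would continue the integration-by-parts scheme from the proof of Theorem~\ref{thm:approx_nderiv}, adapted to exploit the Zygmund hypothesis on $g := f^{(\theta,K)}$, so as to gain an additional factor of $1/m$. After $K$ integrations by parts in~\eqref{eq:tau_m} one has
\[
\tau_m - f = \frac{2}{\pi m^K}\int_0^\infty \bigl[g(re^{i(x+t/m)}) + (-1)^K g(re^{i(x-t/m)})\bigr]\,H_K(t)\,dt,
\]
and the Zygmund hypothesis $\|g(e^{is}\cdot)+g(e^{-is}\cdot)-2g(\cdot)\|_{p,\alpha}\leq M|s|$ allows one to rewrite the integrand (for even $K$) as $[\,2g(re^{ix}) + \Delta^2_{t/m}g(re^{ix})\,]\,H_K(t)$, where the second-difference term is bounded in norm by $Mt/m$. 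The odd-$K$ case is handled in parallel, using that the bracket $g(+)-g(-)$ already vanishes at $t = 0$.

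I would split the integral at $t = \pi$. On $[0,\pi]$ the Zygmund bound, together with $t \leq \pi$ and $\int_0^\pi|H_K(t)|\,dt \leq \pi C_K/4$, produces the $\pi C_K$ contribution to $\widetilde{B_K}$. On $[\pi,\infty)$ I would perform two further (formal) integrations by parts using $H_K = -H_{K+1}'$ and $H_{K+1} = -H_{K+2}'$, moving two derivatives off the integrand and onto the kernel. The resulting bulk integrand, after application of the Zygmund bound, has norm of order $M/m$ and integrates against $|H_{K+2}|$ to give the $C_{K+2}/\pi$ contribution. The boundary terms at $t = \pi$ generated by these two integrations by parts would be arranged to cancel against the corresponding boundary pieces from the $[0,\pi]$ decomposition, so that no term involving $\|g\|_{p,\alpha}$ survives in the final estimate. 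Combining both contributions and replacing $m$ by $n/2$ yields the claimed constant $\widetilde{B_K} = 2^K(C_{K+2}/\pi + \pi C_K)$.

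The main obstacle is justifying the two additional integrations by parts on $[\pi,\infty)$: formally these require $g^{(\theta,2)} = f^{(\theta,K+2)}$, which is not part of the hypothesis. The standard workaround is to regularize $g$ by convolution in the angular variable with a smooth bump $\rho_\varepsilon$ of width $\varepsilon$, carry out the full computation for $g\ast\rho_\varepsilon$, and then pass to the limit $\varepsilon\downarrow 0$; the key point making this work is that $\|g\ast\rho_\varepsilon\|_{\Lambda^*,1,A^p_\alpha}\leq M$ uniformly in $\varepsilon$, so the bound on $\|\tau_m - f\|_{p,\alpha}$ remains of the desired form throughout and in the limit. The appearance of $C_{K+2}$ in the final constant is precisely the record of this two-extra-IBP trade-off.
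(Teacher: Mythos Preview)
Your approach differs from the paper's and, as written, has a genuine gap. The paper never performs further integrations by parts on~\eqref{eq:tau_m}. Instead it uses the classical Zygmund double--averaging decomposition: writing $f = f_{\delta\delta} + g$ with $f_{\delta\delta} = (f_\delta)_\delta$, one checks (via the second--difference identity for $(\,\cdot\,)_{\delta\delta}$) that $\|\partial_\theta^{K+2} f_{\delta\delta}\|_{p,\alpha}\le M/(2\delta)$ and that $\|\partial_\theta^{K} g\|_{p,\alpha}\le M\delta/2$. Theorem~\ref{thm:approx_nderiv} is then applied to $f_{\delta\delta}$ (with $K+2$ derivatives, producing the $C_{K+2}$) and to $g$ (with $K$ derivatives, producing the $C_K$), and one sets $\delta = 2\pi/n$. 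The $C_{K+2}$ thus appears because the \emph{smoothed} function genuinely has $K+2$ bounded $\theta$--derivatives, not because of any extra integrations by parts.

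The gap in your plan is this: after two additional integrations by parts on $[\pi,\infty)$ the bulk integrand contains $D_\theta^2 g$, a genuine second derivative, and the Zygmund hypothesis $\|g(e^{is}\cdot)+g(e^{-is}\cdot)-2g\|_{p,\alpha}\le M|s|$ gives no control whatsoever on $\|D_\theta^2 g\|_{p,\alpha}$ (indeed $\Lambda^*_1$ functions need not have any second derivative). Your regularization does not rescue this: while $\|g\ast\rho_\varepsilon\|_{\Lambda^*,1}\le M$ uniformly, the quantity you actually need after the IBPs is $\|D_\theta^2(g\ast\rho_\varepsilon)\|_{p,\alpha}$, which is only $O(M/\varepsilon)$ and blows up as $\varepsilon\downarrow 0$; the bound on $\|\tau_m - f\|$ therefore does \emph{not} ``remain of the desired form in the limit.'' There is also a leftover constant term on $[0,\pi]$: for even $K$ the piece $2g(re^{ix})\int_0^\pi H_K(t)\,dt$ contains the summand $2g\cdot H_{K+1}(0)$ (and $H_{K+1}(0)=\int_0^\infty H_K\neq 0$ in general), which cannot be cancelled by the boundary terms at $t=\pi$ since those involve $g(re^{i(x\pm\pi/m)})$, not $g(re^{ix})$. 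If instead you let the smoothing scale depend on $m$, say $\varepsilon\sim 1/m$, and separately estimate $f\ast\rho_\varepsilon$ and $f - f\ast\rho_\varepsilon$, you are essentially reproducing the paper's $f_{\delta\delta}+g$ decomposition --- which is the clean way to do it.
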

\begin{proof}
Write 
$f = f_{\delta \delta} + g$ where 
$f_{\delta \delta} = (f_\delta)_\delta$. Then 
\[
\partial_t^{K+2} f_{\delta \delta}(re^{it}) = 
\frac{f^{(\theta,K)}(re^{i(t+2\delta)}) + f^{(\theta,K)}(re^{i(t-2\delta)}) - 2 f^{(\theta,K)}(re^{it})}{4\delta^{2}}
\]
as in the last equation on \cite[Volume 1, p.\ 117]{Zygmund}. 
Thus 
\[
\|\partial_t^{K+2} f_{\delta \delta}\|_{p,\alpha}
\leq \frac{M}{2\delta}.
\]
Following the first and second equations on 
\cite[Volume 1, p.\ 118]{Zygmund} shows that 
\[
\|g^{(\theta,K)}(z)\|_{p,\alpha} = \frac{1}{4\delta^2} 
\left\| \int_0^{2\delta} f^{(\theta,K)}(ze^{it}) + f^{(\theta,K)}(ze^{-it})
                - 2f^{(\theta,K)}(z) (2\delta - t) \, dt \right\|_{p,\alpha}
\]
which shows that $\|g^{(\theta,K)}(z)\|_{p,\alpha} \leq (1/2) M \delta$.  
Applying Theorem \ref{thm:approx_nderiv} to 
$g$ and $f_{\delta \delta}$ and setting 
$\delta = 2\pi/n$ now yields the result. 
\end{proof}

\section{Approximation of Extremal Functions by Polynomials
in the Bergman Norm}

We now discuss extremal problems restricted to the space of 
polynomials of degree $n$.  Let $F_n$ denote the extremal polynomial of 
degree $n$, for the extremal problem of maximizing $\Rp \phi(f)$ 
where $f$ ranges over all 
polynomials of degree at most $n$ with norm $1$. 
We will need the following theorem from \cite{tjf:holder-bergman}. 
\begin{theorem}\label{thm:ext-regularity}
Suppose that $k \in \Lambda^*_{\beta,A^{q}_\alpha}$, and let $F$ be the extremal 
function for $k$.
Then if $2 \le p < \infty$ we have 
$F \in \Lambda^*_{\beta/p, A^p_\alpha}$ 
while if $1 < p \le 2$ we have
$F \in \Lambda_{\beta/2, A^p_\alpha}$.  

Furthermore, suppose that 
$\int_{\mathbb{D}} F \conj{k} \, dA_{\alpha} = 1$ and 
$\|k(e^{it}\cdot) + k(e^{-it} \cdot) - 2k(\cdot)\|_{q,\alpha} 
\leq B|t|^{\beta}$. 
If $p \ge 2$ then 
$\|F\|_{\Lambda^*, \beta/p, A^p_\alpha} \le 
2p^{1/p} (B/2)^{1/p} \leq
2e^{1/e} (B/2)^{1/p}$ whereas if 
$1 < p < 2$ then 
$\|F\|_{\Lambda^*, \beta/2, A^p_\alpha} \le 
2(p-1)^{-1/2}B^{1/2}$.
\end{theorem}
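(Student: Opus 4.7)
The plan is to combine the rotation invariance of $dA_\alpha$, the extremal characterization of $F$, and the uniform convexity of $A^p_\alpha$ encoded by Lemma~\ref{lemma:sharp_uniform_convexity}. Write $F_s(z) := F(e^{is}z)$ and $k_s(z) := k(e^{is}z)$. By rotation invariance of $dA_\alpha$, $\|F_s\|_{p,\alpha}=1$ for every $s\in\mathbb{R}$, and a change of variables in the pairing gives
\[
\int_{\mathbb{D}} F_{\pm t}\,\overline{k}\,dA_\alpha \;=\; \int_{\mathbb{D}} F\,\overline{k_{\mp t}}\,dA_\alpha.
\]
Adding the $+t$ and $-t$ versions, using the normalization $\int F\overline{k}\,dA_\alpha = 1$, and applying H\"older's inequality together with the hypothesis $\|k_t+k_{-t}-2k\|_{q,\alpha}\le B|t|^\beta$ yields
\[
\Rp\int_{\mathbb{D}}\tfrac{F_t+F_{-t}}{2}\,\overline{k}\,dA_\alpha \;\ge\; 1-\tfrac{B|t|^\beta}{2}.
\]

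Because $F$ is extremal and attains the value $1$ with $\|F\|_{p,\alpha}=1$, the dual norm of $k$ on $A^p_\alpha$ equals $1$, so $|\int g\,\overline{k}\,dA_\alpha|\le\|g\|_{p,\alpha}$ for every $g\in A^p_\alpha$. Taking $g=(F_t+F_{-t})/2$ upgrades the previous inequality to
\[
\left\|\tfrac{F_t+F_{-t}}{2}\right\|_{p,\alpha}\;\ge\; 1-\tfrac{B|t|^\beta}{2}.
\]
Now $F_t$ and $F_{-t}$ are both unit vectors in $A^p_\alpha$, so Lemma~\ref{lemma:sharp_uniform_convexity} applies with $\delta=B|t|^\beta/2$: for $p\ge 2$ it gives $\|F_t-F_{-t}\|_{p,\alpha}< 2p^{1/p}(B/2)^{1/p}|t|^{\beta/p}$, and for $1<p\le 2$ it gives $\|F_t-F_{-t}\|_{p,\alpha}< 2(p-1)^{-1/2}B^{1/2}|t|^{\beta/2}$. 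The identity $\|F_t-F_{-t}\|_{p,\alpha}=\|F_{2t}-F\|_{p,\alpha}$ (by yet another rotational change of variables) converts this into a modulus-of-continuity estimate for $F$, from which the stated $\Lambda^*$ bound follows via the triangle inequality.

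The main obstacle will be tracking the exact constants so that they match the ones in the statement. Lemma~\ref{lemma:sharp_uniform_convexity} demands unit-norm inputs, which naturally directs the argument toward the pair $(F_t,F_{-t})$ and thus to a bound on the first difference $\|F_t-F_{-t}\|_{p,\alpha}$, whereas the $\Lambda^*$ norm is defined in terms of the second symmetric difference $\|F_t+F_{-t}-2F\|_{p,\alpha}$. Passing between these via the rotation identity and the triangle inequality is harmless for the qualitative memberships $F\in\Lambda^*_{\beta/p,A^p_\alpha}$ and $F\in\Lambda^*_{\beta/2,A^p_\alpha}$, but to recover the precise constants $2p^{1/p}(B/2)^{1/p}$ and $2(p-1)^{-1/2}B^{1/2}$ one may instead work with a direct Clarkson-type inequality applied to the pair $(F,(F_t+F_{-t})/2)$, using the extremality of $F$ to obtain a lower bound on $\|F+(F_t+F_{-t})/2\|_{p,\alpha}$ and then tracking the $p$-dependence through the elementary estimate $(1-x)^p\ge 1-px$.
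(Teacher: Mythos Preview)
The paper does not give a proof of this theorem; it is quoted from \cite{tjf:holder-bergman}. Your argument is precisely the natural one and is almost certainly what appears there: rotation invariance of $dA_\alpha$ transfers the pairing with $F_{\pm t}$ to a pairing of $F$ with $k_{\mp t}$, the $\Lambda^*$ hypothesis on $k$ plus the normalization $\int F\overline{k}\,dA_\alpha=1$ (which forces $\|\phi\|=1$) give the lower bound $\|(F_t+F_{-t})/2\|_{p,\alpha}\ge 1-B|t|^\beta/2$, and Lemma~\ref{lemma:sharp_uniform_convexity} applied to the unit vectors $F_t,F_{-t}$ finishes. The qualitative memberships follow exactly as you say.

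Your own caveat about the constants is accurate and worth recording. Lemma~\ref{lemma:sharp_uniform_convexity} delivers a bound on $\|F_t-F_{-t}\|_{p,\alpha}$, and passing from this to the symmetric second difference $\|F_t+F_{-t}-2F\|_{p,\alpha}$ via the rotation identity and the triangle inequality costs an extra factor $2^{1-\beta/p}$ (respectively $2^{1-\beta/2}$). The alternative you sketch, applying Clarkson directly to the pair $\bigl(F,(F_t+F_{-t})/2\bigr)$, also falls short: tracking it through yields $2^{2-1/p}p^{1/p}(B/2)^{1/p}$ rather than $2p^{1/p}(B/2)^{1/p}$. The stated constants match exactly the raw output of Lemma~\ref{lemma:sharp_uniform_convexity} on $\|F_t-F_{-t}\|_{p,\alpha}$ before any such passage, and the typographical slip in the statement (the $1<p\le 2$ case asserts $F\in\Lambda_{\beta/2}$ without a star, then bounds the $\Lambda^*$ seminorm) suggests the intended conclusion is really a first-order modulus bound. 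In any case your argument is correct up to a bounded factor depending only on $p$ and $\beta$, and that is all the subsequent applications in the paper require.
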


The space of polynomials of degree 
$n$ is isomorphic with $\mathbb{R}^{2n+2}$.  
The set of all $x \in \mathbb{R}^{2n + 2}$ for which the 
corresponding polynomial has norm of at most $1$ is a 
convex set.  
Thus, the extremal 
problem for finding $F_n$ can be thought of as a problem of maximizing 
a (real) linear functional over a convex set in $\mathbb{R}^{2n + 2}$.  
This is a convex optimization problem, and many algorithms for 
approximating the solution are known. 

We first discuss a worst case rate of convergence of 
$F_n$ to $F$ in the Bergman space norm.  

\begin{theorem}
Let $F$ be the extremal function for $\phi$ and let $F_n$ be the 
extremal polynomial of degree $n$, when the problem is posed over 
polynomials of degree $n$. Suppose 
$k \in \Lambda^*_{\beta, A^q_\alpha}.$
Then for $p < 2$ we have
$\|F - F_n\|_{p,\alpha} = O(n^{-\beta/4})$.  Similarly if $p > 2$ we have 
$\|F - F_n\|_{p,\alpha} = O(n^{-\beta/p^2})$.  

More precisely, 
for $p <2$ and $0 < \beta < 2,$
\[ \|F-F_n\|_{p,\alpha} \leq 4(p-1)^{-3/4} A_{\beta/2}^{1/2} \|k\|_{\Lambda^*, \beta, A^q_\alpha}^{1/4} n^{-\beta/4}
;\]
for $p <2$ and $\beta=2$
\[ \|F-F_n\|_{p,\alpha} \leq 4(p-1)^{-3/4} \widetilde{B_0}^{1/2} \|k\|_{\Lambda^*, \beta, A^q_\alpha}^{1/4} n^{-\beta/4}
;\]
for $p > 2$ and $0 < \beta \leq 2,$
\[ \|F-F_n\|_{p,\alpha} \leq 2^{1+1/p-1/p^2} p^{1/p+1/p^2}  A_{\beta/p}^{1/p}  
   \|k\|_{\Lambda^*, \beta, A^q_\alpha}^{1/p} n^{-\beta/p}
.\]
\end{theorem}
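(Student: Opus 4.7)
The plan is to upgrade a bound on how well $F$ itself can be approximated in $A^p_\alpha$ by polynomials of degree $n$ into a bound on $\|F - F_n\|_{p,\alpha}$, using the uniform convexity estimate of Lemma~\ref{lemma:sharp_uniform_convexity}. Three ingredients enter: the regularity of $F$ supplied by Theorem~\ref{thm:ext-regularity}, the approximation theorems of Section~2, and a standard comparison between the functional values $\Rp \phi(F)$ and $\Rp \phi(F_n)$ via the best polynomial approximant of $F$.

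First I would invoke Theorem~\ref{thm:ext-regularity} to convert $k \in \Lambda^*_{\beta, A^q_\alpha}$ into regularity for $F$: for $1 < p \le 2$ one obtains $F \in \Lambda^*_{\beta/2, A^p_\alpha}$ with an explicit norm bound proportional to $(p-1)^{-1/2}\|k\|_{\Lambda^*,\beta,A^q_\alpha}^{1/2}$, while for $p \ge 2$ one obtains $F \in \Lambda^*_{\beta/p, A^p_\alpha}$ with an analogous bound of order $\|k\|_{\Lambda^*,\beta,A^q_\alpha}^{1/p}$. Feeding this into the first theorem of Section~2 (whenever the effective H\"older exponent is less than $1$), or into the last theorem of Section~2 (for the endpoint $\beta = 2$ with $p < 2$, where the exponent equals $1$), then yields explicit bounds of the form $E_n^{p,\alpha}(F) \le c_1 \|k\|_{\Lambda^*,\beta,A^q_\alpha}^{1/2} n^{-\beta/2}$ when $p < 2$ and $E_n^{p,\alpha}(F) \le c_2 \|k\|_{\Lambda^*,\beta,A^q_\alpha}^{1/p} n^{-\beta/p}$ when $p > 2$.

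Next, let $P_n$ be the best polynomial approximant of $F$, set $\widetilde{P}_n = P_n / \|P_n\|_{p,\alpha}$, and write $M = \Rp \phi(F) = \|\phi\|_{(A^p_\alpha)^*}$. Since $\bigl|\|P_n\|_{p,\alpha} - 1\bigr| \le E_n^{p,\alpha}(F)$, a short calculation gives $\Rp \phi(\widetilde{P}_n) \ge M\bigl(1 - 2E_n^{p,\alpha}(F)\bigr)$ once $n$ is large. Because $F_n$ is extremal among degree-$n$ polynomials, $\Rp \phi(F_n) \ge \Rp \phi(\widetilde{P}_n)$, so averaging yields $\Rp \phi\bigl((F+F_n)/2\bigr) \ge M\bigl(1 - E_n^{p,\alpha}(F)\bigr)$. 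The dual-norm inequality $\Rp \phi(g) \le M\|g\|_{p,\alpha}$ then forces $\tfrac{1}{2}\|F+F_n\|_{p,\alpha} \ge 1 - E_n^{p,\alpha}(F)$, so I may apply Lemma~\ref{lemma:sharp_uniform_convexity} with $\delta = E_n^{p,\alpha}(F)$.

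Lemma~\ref{lemma:sharp_uniform_convexity} then yields $\|F - F_n\|_{p,\alpha} \le \sqrt{8/(p-1)}\, E_n^{p,\alpha}(F)^{1/2}$ for $1 < p \le 2$ and $\|F - F_n\|_{p,\alpha} \le 2 p^{1/p}\, E_n^{p,\alpha}(F)^{1/p}$ for $p \ge 2$. Substituting the estimates on $E_n^{p,\alpha}(F)$ from the first step produces exponents $\beta/4$ for $p < 2$ and $\beta/p^2$ for $p > 2$, matching the claim. The main technical annoyance will be bookkeeping the constants to recover the precise prefactors $(p-1)^{-3/4}$ and $2^{1+1/p-1/p^2} p^{1/p+1/p^2}$ asserted in the theorem, since these arise from compounding the uniform-convexity exponents ($1/2$ and $1/p$) with the exponents buried inside Theorem~\ref{thm:ext-regularity}; a secondary wrinkle is the endpoint $\beta = 2$ for $p < 2$, which must be handled separately by using the $\widetilde{B_0}$-estimate of Section~2's last theorem in place of the $A_{\beta/2}$-estimate of the first theorem.
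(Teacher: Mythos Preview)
Your proposal is correct and follows essentially the same route as the paper: pass from $E_n^{p,\alpha}(F)<\delta$ to $\tfrac12\|F+F_n\|_{p,\alpha}\ge 1-\delta$ via the normalized best approximant and then invoke Lemma~\ref{lemma:sharp_uniform_convexity}. The paper's proof is terser---it cites \cite[Theorem~4.1]{tjf1} for the bound $\Rp\phi(\widetilde{T}_n)\ge \tfrac{1-\delta}{1+\delta}\|\phi\|$ and leaves the substitution of the Section~2 and Theorem~\ref{thm:ext-regularity} estimates implicit---but your more explicit bookkeeping leads to the same constants.
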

\begin{proof}
Let $\|\phi\|$ denote $\|\phi\|_{(A^p_\alpha)^*}$. 
The argument in \cite[Theorem 4.1]{tjf1} shows that, if 
$T_n$ is the best approximate of $F$ of degree $n$ and 
$E_n^{p,\alpha} < \delta$ and $\widetilde{T}_n = T_n/\|T_n\|_{p,\alpha}$, then 
$\Rp \phi(\widetilde{T}_n) \geq \frac{1-\delta}{1+\delta} \|\phi\|$. 
This also shows that 
$\Rp \phi(F_n) \geq \frac{1-\delta}{1+\delta} \|\phi\|$.  
Thus 
\[
\phi((F_n + F)/2) \geq \|\phi\| 
   \left( \frac{1}{2} + \frac{1-\delta}{2(1+\delta)} \right). 
\]
Therefore $(1/2)\|F_n + F \| \geq \frac{1}{2} + 
      \frac{1-\delta}{2(1+\delta)} \geq 
1-\delta$.  This shows that 
$\|F_n - F\| \leq \sqrt{\frac{8}{p-1}} \delta^{1/2}$ for 
$p < 2$ and 
$\|F_n - F\| \leq 2 p^{1/p} \delta^{1/p}$ for 
$p > 2$.  

\end{proof}

The convergence rate in the previous theorem may be slow, especially 
for large $p$.  However, this is a worst case scenario and a given 
$F_n$ may be more accurate than this predicts.  The following theorem 
give a way to bound the distance of a given function $g$ from $F$ in 
terms of the distance from $\bp_\alpha(|F|^p/\overline{F})$ to 
$\bp_\alpha(|g|^p/\overline{g})$. 
An advantage of the theorem is that 
it applies to any $A^p_\alpha$ function $g$, so we can directly apply it 
to an approximation of $F_n$, and not just $F_n$ itself. 
In the theorem statement, $\bp_\alpha$ denotes the Bergman projection for 
$A^p_\alpha$, which is the orthogonal projection from 
$L^2_\alpha$ onto $A^2_\alpha$.  
Also $|F|^p/\overline{F} = F^{p/2} \overline{F^{(p/2)-1}} = 
|F|^{p-1} \sgn F$ should be interpreted to equal $0$ when $F$ has a zero. 
It is known that 
$P_\alpha$ is bounded from $L^p_\alpha$ to $A^p_\alpha$ for 
$1 < p < \infty$ (see \cite{Zhu_Ap}). 

\begin{lemma}
Suppose that $F_1$ and $F_2$ are the $A^p_\alpha$ extremal functions for 
$\phi_1$ and $\phi_2$ respectively
Suppose that $\|\phi_1\| = \|\phi_2\| = 1$ and 
$\|\phi_1 - \phi_2\| < \delta$.  Then for $p > 2$ 
\[
\|F_1 - F_2\| < 2^{1-(1/p)} p^{1/p} \delta^{1/p};
\]
for $p < 2$ 
\[
\|F_1 - F_2\| < 2 (p-1)^{-1/2} \delta^{1/2}.
\]
\end{lemma}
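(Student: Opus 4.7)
The plan is to mimic the structure of the preceding theorem's proof, replacing the role of the best polynomial approximant with the second extremal function, and then apply Lemma \ref{lemma:sharp_uniform_convexity} to the pair $F_1, F_2$.

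First, since $F_i$ is extremal for $\phi_i$ with $\|F_i\|_{p,\alpha}=1$, we have $\operatorname{Re}\phi_i(F_i)=\|\phi_i\|=1$ for $i=1,2$. I would next use the hypothesis $\|\phi_1-\phi_2\|<\delta$ together with $\|F_2\|_{p,\alpha}=1$ to write
\[
\operatorname{Re}\phi_1(F_2) = \operatorname{Re}\phi_2(F_2) + \operatorname{Re}(\phi_1-\phi_2)(F_2) \geq 1 - \delta.
\]
Averaging this with $\operatorname{Re}\phi_1(F_1)=1$ gives
\[
\operatorname{Re}\phi_1\!\left(\tfrac{F_1+F_2}{2}\right)\geq 1 - \tfrac{\delta}{2},
\]
and since $\|\phi_1\|=1$ this forces $\tfrac{1}{2}\|F_1+F_2\|_{p,\alpha}\geq 1-\tfrac{\delta}{2}$.

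Having established the midpoint estimate, the second step is simply to invoke Lemma \ref{lemma:sharp_uniform_convexity} with $f=F_1$, $g=F_2$, and the uniform convexity parameter $\delta/2$ in place of $\delta$. For $1<p\le 2$ this yields
$\|F_1-F_2\|_{p,\alpha}<\sqrt{8/(p-1)}\,(\delta/2)^{1/2}=2(p-1)^{-1/2}\delta^{1/2}$, and for $p\ge 2$ it yields
$\|F_1-F_2\|_{p,\alpha}<2p^{1/p}(\delta/2)^{1/p}=2^{1-1/p}p^{1/p}\delta^{1/p}$, which are precisely the bounds claimed.

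There is no real obstacle here; the argument is a two-line repetition of the method used in the preceding theorem, once one observes that only the extremality relations $\operatorname{Re}\phi_i(F_i)=1$ and the bound on $\|\phi_1-\phi_2\|$ are needed to produce the midpoint lower bound. The only minor care required is in the bookkeeping of the factor $2$ that arises because here we save a factor of $\delta/2$ (instead of $\delta$) in the hypothesis of the uniform convexity lemma, which accounts for the improvement over the worst-case constants in the previous theorem.
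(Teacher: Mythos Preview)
Your proof is correct and follows essentially the same approach as the paper: both arguments derive the midpoint estimate $\tfrac{1}{2}\|F_1+F_2\|>1-\tfrac{\delta}{2}$ by comparing $\phi_1(F_2)$ with $\phi_2(F_2)$ via $\|\phi_1-\phi_2\|<\delta$, and then invoke Lemma~\ref{lemma:sharp_uniform_convexity} with $\delta/2$ in place of $\delta$. The only cosmetic difference is that you phrase the estimate using real parts while the paper uses absolute values, which are equivalent here since $\phi_i(F_i)=\|\phi_i\|$ is real.
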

\begin{proof}
Note that 
\[
\begin{split}
|\phi_1(F_1) + \phi_1(F_2)| & \geq
|\phi_1(F_1) + \phi_2(F_2)| + |(\phi_1 - \phi_2)(F_2)| = 
\|\phi_1\| + \|\phi_2\| - \delta \\ &\geq 
2 - \delta.
\end{split}
\]

This implies that 
\[
\left\| \frac{F_1 + F_2}{2} \right\| > 1 - \frac{\delta}{2}.
\]
The result now follows by Lemma \ref{lemma:sharp_uniform_convexity}
\end{proof}

It is known that if $k$ is a positive scalar multiple of 
$\bp_\alpha(|F|^p/\overline{F})$, where $F$ has unit norm, then
$F$ is the extremal function for $k$.  
Since 
$\int_{\mathbb{D}} F \overline{\bp_\alpha(|F|^p/\overline{F})} \, dA = 
\int_{\mathbb{D}} F \overline{|F|^p/\overline{F}} \, dA = 1$, we see that 
if $k$ is scaled so that $\int_{\mathbb{D}} F \overline{k} \, dA_\alpha = 1$, 
then $k = \bp_\alpha(|F|^p/\overline{F})$.

\begin{theorem}\label{thm:ferrork}
Let $k \in A^q_\alpha$, and let $F$ be the extremal function for 
$k$.  Let $\widehat{k}$ be any positive scalar multiple of 
$k$ (so that $\widehat{k}$ also has $F$ as extremal function.) 
Let $G \in A^p_\alpha$ and suppose that for some 
$\delta$ such that $0 < \delta < 1$ the inequality
\[
\|\bp_\alpha(|G|^p/\overline{G}) - \widehat{k} )\|_{A^q_\alpha} 
< \delta
\]
is satisfied.  Then for $p > 2$,
\[
\|F - G\| < 2 p^{1/p} \delta^{1/p}
\]
and for $p < 2$
\[
\|F_1 - F_2\| < 2\sqrt{2} (p-1)^{-1/2} \delta^{1/2}.
\]

\end{theorem}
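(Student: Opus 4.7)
My plan is to reduce this to the preceding lemma. Set $\tilde G := G/\|G\|_{p,\alpha}$ (we may assume $G \not\equiv 0$). By the characterization in the paragraph immediately preceding the theorem, $\tilde G$ is the extremal function for the kernel $\bp_\alpha(|G|^p/\overline{G})$, while $F$ is the extremal for $\widehat k$. Both $F$ and $\tilde G$ have unit $A^p_\alpha$ norm, so the preceding lemma will yield a bound on $\|F - \tilde G\|_{p,\alpha}$ once I control the dual-norm difference between the two associated unit-norm functionals.

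For the unnormalized functionals $\psi_{\widehat k}(f) = \int_{\mathbb D} f\,\overline{\widehat k}\,dA_\alpha$ and $\psi_G(f) = \int_{\mathbb D} f\,\overline{\bp_\alpha(|G|^p/\overline{G})}\,dA_\alpha$, extremality gives $\|\psi_{\widehat k}\|_* = \psi_{\widehat k}(F)$ and $\|\psi_G\|_* = \psi_G(\tilde G) = \|G\|_{p,\alpha}^{p-1}$; the latter uses the projection identity $\int \tilde G\,\overline{\bp_\alpha(h)}\,dA_\alpha = \int \tilde G\,\overline{h}\,dA_\alpha$ applied with $h = |G|^p/\overline{G}$. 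The hypothesis, together with $|\psi_{\widehat k}(f) - \psi_G(f)| \le \|\widehat k - \bp_\alpha(|G|^p/\overline{G})\|_{q,\alpha}\|f\|_{p,\alpha}$, gives $\|\psi_{\widehat k} - \psi_G\|_* < \delta$; testing against $F$ and $\tilde G$ and using extremality then forces $|\|\psi_{\widehat k}\|_* - \|G\|_{p,\alpha}^{p-1}| < \delta$. Rescaling to unit-norm functionals $\phi_1, \phi_2$ with extremals $F, \tilde G$, and fixing the natural normalization $\widehat k = \bp_\alpha(|F|^p/\overline{F})$ so that $\|\psi_{\widehat k}\|_* = 1$, a triangle-inequality bookkeeping yields $\|\phi_1 - \phi_2\|_* < \delta$.

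The preceding lemma then delivers $\|F - \tilde G\|_{p,\alpha} < 2p^{1/p}\delta^{1/p}$ for $p > 2$ and $\|F - \tilde G\|_{p,\alpha} < \sqrt{8/(p-1)}\,\delta^{1/2}$ for $p < 2$. The final bound follows from $\|F - G\|_{p,\alpha} \le \|F - \tilde G\|_{p,\alpha} + |1 - \|G\|_{p,\alpha}|$, where the second term is controlled by $|1 - \|G\|_{p,\alpha}^{p-1}| < \delta$ (for small $\delta$ this forces $|1 - \|G\|_{p,\alpha}|$ to be $O(\delta)$, dominated by the first term; the extra $\sqrt 2$ factor in the $p < 2$ case of the theorem is exactly what is needed to absorb this correction). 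The main obstacle is the scaling bookkeeping: since $\widehat k$ is only specified up to a positive scalar multiple of $k$, one has to commit to a canonical normalization at the outset and carefully carry the scaling constants through each of the steps above so that the final constants come out to precisely $2p^{1/p}$ and $2\sqrt 2\,(p-1)^{-1/2}$.
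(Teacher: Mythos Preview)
Your overall strategy (normalize the two functionals and invoke the preceding lemma) is exactly the paper's, but the execution has two genuine gaps.

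First, you are not free to ``fix the natural normalization $\widehat{k}=\bp_\alpha(|F|^p/\overline{F})$.'' The scalar multiple $\widehat{k}$ is part of the hypothesis: the inequality $\|\bp_\alpha(|G|^p/\overline{G})-\widehat{k}\|_{q,\alpha}<\delta$ is assumed for a \emph{given} $\widehat{k}$, and changing the scalar changes $\delta$. The paper instead lets $\phi$ be the (possibly non-unit-norm) functional with kernel $\widehat{k}$ and normalizes it \emph{afterwards}.

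Second, your ``triangle-inequality bookkeeping yields $\|\phi_1-\phi_2\|_*<\delta$'' is off by a factor of $2$. With $\|\psi\|=1$ and $\|\phi-\psi\|<\delta$ one gets $|\|\phi\|-1|<\delta$, hence $\|\phi-\phi/\|\phi\|\|<\delta$, and therefore only $\|\phi/\|\phi\|-\psi\|<2\delta$. This factor of $2$ is essential: feeding $2\delta$ into the lemma is precisely what produces $2^{1-1/p}p^{1/p}(2\delta)^{1/p}=2p^{1/p}\delta^{1/p}$ for $p>2$ and $2(p-1)^{-1/2}(2\delta)^{1/2}=2\sqrt{2}(p-1)^{-1/2}\delta^{1/2}$ for $p<2$. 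Your account instead applies the lemma with $\delta$ and then tries to recover the missing factor from the correction $|1-\|G\|_{p,\alpha}|$, which does not yield the stated constants.

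The cleanest fix is to note that the paper tacitly takes $\|G\|_{p,\alpha}=1$ (the proof begins ``Let $\psi$ be the functional of unit norm for which $G$ is the extremal function''), so $G=\widetilde{G}$ and no final absorption step is needed. With that assumption in place, your argument---normalize the functional with kernel $\widehat{k}$, obtain $\|\widetilde{\phi}-\psi\|<2\delta$, and apply the lemma---coincides with the paper's.
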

\begin{proof}
Let $\psi$ be the functional of unit norm for which $G$ is the extremal 
function.  Then $\psi$ has kernel 
$\bp_\alpha(|G|^p/\overline{G})$ and $\|\psi\|=1$. 
Let $\phi$ be the functional with kernel $\widehat{k}$. 
We then have 
\[
\|\phi - \psi\|_{(A^p_\alpha)^*} \leq 
\|\bp_\alpha(|G|^p/\overline{G}) - \widehat{k} )\|_{A^q_\alpha} 
 < \delta. 
\]
 This implies 
that $1-\delta < \|\phi\| < 1 + \delta$.  
Let $\widetilde{\phi} = \phi / \|\phi\|$. 
Then
\(
\| \phi - \widetilde{\phi} \| < \delta
\)
and thus $\|\widetilde{\phi} - \psi\| < 2 \delta$.  
The conclusion now follows from the previous lemma.
\end{proof}

\section{Approximation of Extremal Functions 
by Polynomials in the Supremum Norm}

We now discuss how to use the results in the previous section to bound 
the distance from a given function to $F$ in the supremum norm.  
We will use the following theorem found in
\cite[Corollary 4.3]{tjf:holder-bergman}.
The proof of this theorem shows that the same results hold if 
$F$ is replaced by $F_n$.  However, we may need to multiply $k$ by 
a positive scalar constant greater than $1$ so that the condition 
$\int_{\mathbb{D}} F_n \conj{k} \, dA_{\alpha} = 1$ holds.  

\begin{theorem}\label{thm:holder_bounds}
Let $1 < p < \infty$ and let $p$ and $q$ be conjugate exponents. 
Suppose $k \in \Lambda^*_{2,A^q_\alpha}$ and that 
$\int_{\mathbb{D}} F_n \conj{k} \, dA_{\alpha} \geq 1$.   
If $2 \le p < \infty$ and  
$-1 < \alpha < 0$, then $f$ has H\"{o}lder continuous boundary values.
If $1 < p < 2$ and $-1 < \alpha < p-2$, the same conclusion holds. 

Let $B = \|k\|_{\Lambda^*,2, A^p_\alpha}$.  
For $p > 2$, 
The H\"{o}lder exponent is $-\alpha/p$.  The H\"{o}lder constant 
is bounded above by 
\[
2p^{1/p}(B/2)^{1/p} \cdot 383\left(1-\frac{2}{p}\right)^{-1} 
  \cdot 2\left(\frac{\Gamma(q-1)}{\Gamma(q/2)^2}\right)^{1/q} 
\cdot \left(1 - \frac{2p}{\alpha}\right) 
\]

For $p < 2$, if we let $\eta$ be any number greater than $0$, then 
the H\"{o}lder exponent can be taken to be $1-2/p-\alpha/p-\eta$ 
(if the indicated exponent is positive).  
The H\"{o}lder constant is bounded above by 
\[
\begin{split}
2(p-1)^{-1/2}B^{1/2} &\cdot 192\left(1-\frac{2}{p}\right)^{-1} 
  \cdot 2\left(\frac{\Gamma(q-1)}{\Gamma(q/2)^2}\right)^{1/q} 
\\
&\cdot \left(1 - \frac{2}{1-2/p-\alpha/p-\eta}\right)
\end{split}
\]
\end{theorem}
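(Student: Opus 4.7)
My approach is to combine the $F_n$-analogue of Theorem \ref{thm:ext-regularity}, which gives quantitative mean Bergman-H\"{o}lder regularity of $F_n$, with the passage from mean Bergman-H\"{o}lder continuity to pointwise boundary H\"{o}lder continuity provided by \cite[Corollary 4.3]{tjf:holder-bergman}.

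First, I would verify that Theorem \ref{thm:ext-regularity} carries over verbatim with $F$ replaced by $F_n$. The proof of that theorem is a uniform-convexity argument, in which one tests the pair $F$ and $F(e^{it}\cdot)$, both of unit norm, against the functional $\phi$ determined by $k$ and invokes Lemma \ref{lemma:sharp_uniform_convexity}. Since the subspace of polynomials of degree at most $n$ is rotation-invariant and itself uniformly convex (being closed in $A^p_\alpha$), the shifted competitor $F_n(e^{it}\cdot)$ is admissible in the restricted extremal problem, and the same chain of inequalities goes through with $\phi$ restricted to polynomials in place of $\phi$ itself. Applied with $\beta = 2$, this produces
\[
\|F_n\|_{\Lambda^*,\, 2/p,\, A^p_\alpha} \le 2 p^{1/p}(B/2)^{1/p} \qquad (p \ge 2),
\]
\[
\|F_n\|_{\Lambda^*,\, 1,\, A^p_\alpha} \le 2(p-1)^{-1/2} B^{1/2} \qquad (1 < p < 2).
\]
The hypothesis $\int_{\mathbb{D}} F_n \conj{k}\, dA_\alpha \ge 1$ takes the place of the normalization $=1$ used in Theorem \ref{thm:ext-regularity}: if the integral strictly exceeds $1$, the positive rescaling of $k$ needed to restore equality is a contraction, so $B$ only decreases and the bounds above remain valid as stated.

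Next, I would feed these estimates into \cite[Corollary 4.3]{tjf:holder-bergman}, which converts membership in $\Lambda^*_{\beta',\, A^p_\alpha}$ to classical H\"{o}lder continuity on $\overline{\mathbb{D}}$ with an exponent of the form $\beta' - (2+\alpha)/p$ and an explicit multiplicative Hölder constant. For $p \ge 2$ and $-1 < \alpha < 0$, plugging in $\beta' = 2/p$ yields exponent $-\alpha/p$ and the factor $383(1-2/p)^{-1} \cdot 2(\Gamma(q-1)/\Gamma(q/2)^2)^{1/q} \cdot (1 - 2p/\alpha)$; composition with the first step gives the first stated bound. For $1 < p < 2$ and $-1 < \alpha < p-2$, plugging in $\beta' = 1$ (the range of $\alpha$ being precisely what is needed to keep $1 - 2/p - \alpha/p > 0$ so that there is room to subtract an arbitrary $\eta > 0$ as required by the strict form of the corollary) yields exponent $1 - 2/p - \alpha/p - \eta$ and the factor $192(1-2/p)^{-1} \cdot 2(\Gamma(q-1)/\Gamma(q/2)^2)^{1/q} \cdot (1 - 2/(1-2/p-\alpha/p-\eta))$; composition with the first step gives the second.

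The main obstacle is the first step: one has to honestly check that every competitor invoked in the proof of Theorem \ref{thm:ext-regularity} remains available when the extremal problem is restricted to polynomials of degree at most $n$. Rotation invariance of this subspace takes care of the shifts $F_n(e^{\pm it}\cdot)$, and the only remaining concern is the normalization convention, which, as noted above, is \emph{weakened} rather than strengthened by the hypothesis $\int F_n \conj{k}\, dA_\alpha \ge 1$. Once this verification is in place, the two ingredients chain together multiplicatively to produce exactly the H\"{o}lder exponents and constants stated in the theorem.
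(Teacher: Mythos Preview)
Your proposal is correct and matches the paper's approach exactly: the paper simply cites \cite[Corollary 4.3]{tjf:holder-bergman} for the result with $F$ and then remarks that ``the proof of this theorem shows that the same results hold if $F$ is replaced by $F_n$,'' after a suitable rescaling of $k$. You have supplied precisely the missing justification for that remark---rotation invariance of the polynomial subspace makes $F_n(e^{\pm it}\cdot)$ an admissible competitor, so the uniform-convexity argument behind Theorem~\ref{thm:ext-regularity} goes through unchanged, and the normalization hypothesis $\int F_n\conj{k}\,dA_\alpha\ge 1$ only helps because the contraction of $k$ needed to restore equality can only shrink $B$.
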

For ease of notation, we will call the H\"{o}lder exponent 
$\beta(p, \alpha)$ for $p > 2$ and $\beta(p, \alpha, \eta)$ 
for $p < 2$.  We will denote the constant by $C(B,p,\alpha)$ and 
$C(B,p,\alpha,\eta)$ respectively.    
For $p > 2$ if we refer to 
$\beta(p, \alpha, \eta)$ and $C(p, \alpha, \eta)$, we mean 
$\beta(p, \alpha)$ and $C(p, \alpha)$ respectively. 

Since $\|F\| = 1$, it follows that $|F(0)| < 1$.  
Thus the preceding estimate can be used to bound 
$\|F\|_\infty$.  However, the estimates do not allow one to 
conclude directly that $\|F - F_n\|_\infty$ must be small for large 
$n$.  The following theorem remedies this situation.  It says that 
if a function is H\"{o}lder continuous (with control on the exponent 
and size of the constant) and the function has small 
$L^p_\alpha$ norm, then its uniform norm cannot be too large.

\begin{theorem}\label{thm:holder_bergman_uniform_bound}
Let $\epsilon > 0$ and $0 < \beta \leq 1$ be given.  
Suppose that $f \in L^p_\alpha(\mathbb{D})$ and that for some $C > 0$ we have 
$|f(z) - f(w)| \leq C |z-w|^\beta$ for every $z, w \in \mathbb{D}$.  Then 
there exists a $\delta > 0$ such that if $\|f\|_{p,\alpha} < \delta$ then 
$\|f\|_{\infty} < \epsilon$.  In fact, we may take $\delta$ to be
\begin{equation}\label{eq:delta}
\left(\frac{(\alpha + 1)\pi}{4}\right)^{1/p} B(2/\beta, p+1)^{1/p} 
               C^{-2/(\beta p)}\epsilon^{1 + 2/(\beta p)}
\end{equation}
as long as $\epsilon < 2^{\beta/2} C$. Here $B(x,y)$ is the Beta function. 
\end{theorem}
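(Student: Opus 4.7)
The plan is to argue by contrapositive: assuming $\|f\|_\infty \geq \epsilon$, I show $\|f\|_{p,\alpha} \geq \delta$. Since $f$ is $\beta$-H\"{o}lder continuous on $\mathbb{D}$, it extends continuously to $\overline{\mathbb{D}}$, so $|f|$ attains its supremum at some $z_0 \in \overline{\mathbb{D}}$ with $|f(z_0)| \geq \epsilon$. The H\"{o}lder estimate yields the pointwise lower bound $|f(z)| \geq \epsilon - C|z-z_0|^\beta$, which is nonnegative on the Euclidean disk $B(z_0, r)$ with $r = (\epsilon/C)^{1/\beta}$; the hypothesis $\epsilon < 2^{\beta/2} C$ forces $r < \sqrt{2}$. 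Raising to the $p$th power and integrating against $dA_\alpha$ gives
\[
\|f\|_{p,\alpha}^p \geq \int_{B(z_0, r) \cap \mathbb{D}} (\epsilon - C|z - z_0|^\beta)^p \, dA_\alpha(z),
\]
so it suffices to bound this integral below by $\delta^p$.

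I would next evaluate the right-hand side in polar coordinates $z = z_0 + te^{i\phi}$ centered at $z_0$. The radial part can be computed in closed form via the substitution $u = (t/r)^\beta$:
\[
\int_0^r (\epsilon - Ct^\beta)^p\, t\, dt = \frac{\epsilon^p r^2}{\beta}\, B(2/\beta, p+1),
\]
which, together with $r^2 = (\epsilon/C)^{2/\beta}$, already yields the factor $B(2/\beta, p+1)\, C^{-2/\beta}\, \epsilon^{p + 2/\beta}$ of $\delta^p$. The remaining angular factor is the weighted arc length $\int_{\Phi(t, z_0)} (1 - |z_0 + te^{i\phi}|^2)^\alpha\, d\phi$, taken over the arc $\Phi(t, z_0)$ on which $z_0 + te^{i\phi}$ lies in $\mathbb{D}$.

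The main obstacle is securing a uniform lower bound on this angular factor. When $z_0$ is deep in $\mathbb{D}$ (so that $B(z_0, r) \subset \mathbb{D}$), the full circle is available and the bound is straightforward. When $z_0$ lies on or near $\partial \mathbb{D}$, at most a half-circle is available, and the factor $\pi/4$ in $\delta^p$ (rather than the $\pi$ one would see in the interior case) is presumably what accommodates this ``half-disk'' loss. One must also control the weight $(1 - |z|^2)^\alpha$ on the portion of $B(z_0, r)$ inside $\mathbb{D}$ uniformly in the sign of $\alpha$: for $\alpha > 0$ the weight is small near $\partial\mathbb{D}$, while for $\alpha < 0$ it blows up, so a careful sub-region argument exploiting $r < \sqrt{2}$ is needed to avoid introducing any spurious $\alpha$-dependent factor beyond the $(\alpha+1)$ that arises from the normalization of $dA_\alpha$. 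Once this weighted half-disk estimate is in place, combining it with the closed-form radial identity yields $\|f\|_{p,\alpha}^p \geq \delta^p$, completing the contrapositive.
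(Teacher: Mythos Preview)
Your approach is essentially the paper's: contrapositive, the pointwise H\"older lower bound $|f(z)|\ge \epsilon - C|z-z_0|^\beta$ on $B(z_0,r_0)$ with $r_0=(\epsilon/C)^{1/\beta}$, then a polar integral yielding the Beta-function factor. Where you hesitate, the paper resolves matters more bluntly than you anticipate.

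First, the geometric ``obstacle.'' The paper does not use a half-disk; it uses a \emph{quarter sector}. For $z_0\in\overline{\mathbb{D}}$, rotate so that $z_0\in[0,1]$, and consider the sector $\{z_0+te^{i\phi}:0\le t\le r_0,\ 3\pi/4\le\phi\le5\pi/4\}$. A point $z=z_0+te^{i\phi}$ in this sector satisfies $|z|^2=|z_0|^2+2|z_0|t\cos\phi+t^2\le 1-\sqrt{2}\,t+t^2<1$ whenever $t<\sqrt{2}$. Thus for $r_0<\sqrt{2}$ the whole quarter sector lies in $\mathbb{D}$, uniformly in $z_0$. This is exactly what the hypothesis $\epsilon<2^{\beta/2}C$ buys, and it explains the factor $\pi/4$ (a quarter of the total angle $2\pi$, divided by the $\pi$ in $dA_\alpha$, contributes $1/2$; combined with the $2r\,dr$ Jacobian and the radial Beta identity this gives the stated constant).

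Second, the weight. The paper simply discards it: on the quarter sector one has $(1-|z|^2)^\alpha\ge 1$ because $\alpha\le 0$, which is the regime ($-1<\alpha<0$, or $-1<\alpha<p-2<0$) used throughout the paper. So there is no ``careful sub-region argument''; the weight is bounded below by $1$ in one stroke. Your worry about $\alpha>0$ is well-founded in the sense that the explicit formula \eqref{eq:delta}, as written, does not obviously survive for $\alpha>0$; the paper's argument for the explicit $\delta$ tacitly uses $\alpha\le 0$. (The existence of \emph{some} $\delta$ for general $\alpha>-1$ follows from the continuity-in-$z_0$ compactness step the paper gives before specializing.)

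In short: replace ``half-disk'' by ``quarter sector'' and, for the explicit constant, restrict to $\alpha\le 0$ and bound the weight by $1$. Then your radial computation finishes the proof exactly as in the paper.
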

For ease of notation we will denote the $\delta$ in the theorem by 
$\delta(\epsilon; C, \beta, p, \alpha)$.  
We let $\epsilon(\delta; C, \beta, p, \alpha)$ denote the inverse function 
of $\delta(\epsilon) = \delta(\epsilon; C, \beta, p, \alpha)$.  

\begin{proof}
Suppose that $|f(z_0)|  > b > 0$.  Then 
$|f(z)| > b - C |z-z_0|^\beta$ for $0 \leq |z-z_0| \leq r_0$, where 
$r_0 = (b/C)^{1/\beta}$.
So 
\[
\|f\|_p^p > 
 (\alpha + 1)\int\limits_{\substack{z \in \mathbb{D}\\|z-z_0|<r_0}} 
   (b - C|z-z_0|^\beta)^p \, (1-|z|^2)^\alpha \, dA(z). 
\]
Now for fixed $b$, the quantity on the right is a continuous function of 
$z_0$ for $z_0 \in \overline{\mathbb{D}}$, and thus has a minimum; 
call the minimum $\delta(b)^p$.  Then if 
$|f(z_0)| \geq b$ we have $\|f\|_p \geq \delta(b)$.  So if 
$\|f\|_p < \delta(\epsilon)$ we have $\|f\|_\infty < \epsilon$. 

We may estimate $\delta(\epsilon)$ for $r_0 < \sqrt{2}$ by noting that 
in this case the region $\mathbb{D} \cap \{z: |z-z_0|<r_0\}$ contains at least 
a quarter sector of the disc $\{z:|z-z_0| < r_0\}$, so 
\[
\delta(b)^p \geq \frac{\alpha + 1}{4} \int_0^{r_0} 
  \int_0^{\pi / 2} (b-Cr^\beta)^p 2r \, dr = 
\frac{(\alpha + 1)\pi}{4} b^{p+(2/\beta)} C^{-2/\beta} B(2/\beta, p+1) 
\]
where $B(x,y)$ is the beta function. 
\end{proof}

We may also prove the following theorem.  It will not be used in the sequel, 
but we include it for completeness.  
\begin{theorem}
Let $\epsilon > 0$ and $0 <\gamma < \beta \leq 1$ be given.  
Suppose that $f \in L^p(\mathbb{D})$ and that for some $C > 0$ we have 
$|f(z) - f(w)| \leq C |z-w|^\beta$ for every $z, w \in \mathbb{D}$.  Then 
there exists a $\delta > 0$ such that if $\|f\|_p < \delta$ then 
$|f(z)-f(w)| < \epsilon |z-w|^\gamma$.  
\end{theorem}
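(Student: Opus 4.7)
The plan is to split the argument into two regimes depending on the distance $|z-w|$, using the $\beta$-H\"older bound directly when points are close and the previous uniform-norm theorem when points are far apart.

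First I would set a threshold $r > 0$ whose value will be chosen momentarily. For $|z-w| \leq r$, the $\beta$-H\"older hypothesis gives
\[
|f(z) - f(w)| \leq C|z-w|^{\beta} = C |z-w|^{\beta-\gamma} |z-w|^{\gamma} \leq C r^{\beta-\gamma} |z-w|^{\gamma},
\]
so as long as $r \leq (\epsilon/C)^{1/(\beta-\gamma)}$ this regime automatically gives the desired estimate. I would fix $r := (\epsilon/C)^{1/(\beta-\gamma)}$ (or a smaller positive number so that $r < \sqrt{2}$, to stay in the regime where Theorem \ref{thm:holder_bergman_uniform_bound} produces the explicit $\delta$ of display \eqref{eq:delta}).

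For $|z-w| > r$, the trivial bound $|f(z)-f(w)| \leq 2\|f\|_{\infty}$ together with $|z-w|^{\gamma} > r^{\gamma}$ shows that it suffices to guarantee
\[
2\|f\|_{\infty} \leq \epsilon r^{\gamma}.
\]
Here I would invoke Theorem \ref{thm:holder_bergman_uniform_bound} (with $\alpha = 0$, since the statement is in $L^p(\mathbb{D})$): applied to $f$ with the same constant $C$ and exponent $\beta$, it supplies a threshold $\delta_{0} := \delta(\epsilon r^{\gamma}/2;\, C, \beta, p, 0)$ such that $\|f\|_{p} < \delta_{0}$ forces $\|f\|_{\infty} < \epsilon r^{\gamma}/2$.

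Combining the two regimes, setting $\delta := \delta_{0}$ with $r = (\epsilon/C)^{1/(\beta-\gamma)}$ (further reduced if necessary so that the hypotheses of Theorem \ref{thm:holder_bergman_uniform_bound} apply, in particular $\epsilon r^{\gamma}/2 < 2^{\beta/2} C$) completes the argument. There is no real obstacle here; the only point of care is bookkeeping, namely ensuring that $r$ is chosen small enough for the explicit $\delta$ formula \eqref{eq:delta} to be valid and then tracking how $\delta$ depends on $\epsilon$, $\gamma$, $\beta$, $C$, and $p$ so that an explicit expression could, if desired, be written down in closed form analogous to \eqref{eq:delta}.
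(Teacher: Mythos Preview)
Your argument is correct and is essentially the same as the paper's: both pivot on the threshold $r=(\epsilon/C)^{1/(\beta-\gamma)}$ and then invoke Theorem~\ref{thm:holder_bergman_uniform_bound} to force $\|f\|_\infty$ below (a constant times) $\epsilon r^\gamma$. The only cosmetic difference is that the paper phrases it as a contradiction---assuming $|f(z)-f(w)|\ge \epsilon|z-w|^\gamma$ forces $|z-w|^{\beta-\gamma}>\epsilon/C$ and hence $|f(z)-f(w)|>\epsilon^{\beta/(\beta-\gamma)}C^{-\gamma/(\beta-\gamma)}$, contradicting the uniform bound---whereas you run the same computation directly as a two-regime case split.
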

\begin{proof}
Suppose $|f(z) - f(w)| \geq \epsilon |z-w|^{\gamma}$ 
for some $z$ and $w$.  Since 
$|f(z) - f(w)| < C |z-w|^{\beta}$ we have 
$|z-w|^{\beta-\gamma} > \epsilon / C$.  Thus 
$|z-w|^{\gamma} > (\epsilon/C)^{\gamma/(\beta-\gamma)}$, and so 
$|f(z) - f(w)| > \epsilon^{\beta/(\beta-\gamma)} C^{-\gamma/(\beta - \gamma)}$.  
But this contradicts the previous theorem if $\delta$ is small enough.
\end{proof}

\begin{theorem}\label{thm:extinfinity_error}
Let $1 < p < \infty$ and let $p$ and $q$ be conjugate exponents. 
Suppose $k \in \Lambda^*_{2,A^q_\alpha}$.  
If $2 \le p < \infty$ let 
$-1 < \alpha < 0$. 
If $1 < p < 2$ let $-1 < \alpha < p-2$.
Suppose that 
$\int_{\mathbb{D}} F \conj{k} \, dA_{\alpha} > 1$.  
Then if $\|F-F_n\|_{A^{p}_\alpha} < \delta$ then 
$\|F-F_n\|_\infty < 
\epsilon\left(\delta; C, \beta, p, \alpha\right)$, where 
$\beta = \beta(p, \alpha, \eta)$ and 
\[
C = C(\|k\|_{\Lambda^*, 2, A^q_\beta}, p, \alpha, \eta) +
     C((1-\delta)^{-1} \|k\|_{\Lambda^*, 2, A^q_\beta}, p, \alpha, \eta).
\]
\end{theorem}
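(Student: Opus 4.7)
The plan is to bound $\|F-F_n\|_\infty$ by combining a joint H\"older estimate on $F$ and $F_n$, obtained from Theorem \ref{thm:holder_bounds}, with the H\"older-to-uniform conversion provided by Theorem \ref{thm:holder_bergman_uniform_bound}. The structure is guided by the remark just after Theorem \ref{thm:holder_bounds}, which observes that the proof carries over from $F$ to $F_n$ provided one first rescales $k$ so that the condition $\int_{\mathbb{D}} F_n \conj{k} \, dA_\alpha = 1$ is met.

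The first step is to apply Theorem \ref{thm:holder_bounds} to $F$ itself. The hypothesis $\int_{\mathbb{D}} F \conj{k} \, dA_\alpha > 1$ implies the side condition $\int_{\mathbb{D}} F \conj{k} \, dA_\alpha \ge 1$ needed by that theorem, and so one obtains the H\"older exponent $\beta = \beta(p,\alpha,\eta)$ together with a H\"older constant $C_F = C(\|k\|_{\Lambda^*,2,A^q_\alpha},p,\alpha,\eta)$ bounding $|F(z)-F(w)|$ on $\overline{\mathbb{D}}$.

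The second step is to apply the same theorem to $F_n$, but with $k$ replaced by a positive scalar multiple $\widehat{k} = \mu k$, where $\mu \ge 1$ is chosen so that $\int_{\mathbb{D}} F_n \conj{\widehat{k}} \, dA_\alpha = 1$. Because $\int_{\mathbb{D}} F \conj{k} \, dA_\alpha > 1$ and the difference $\int_{\mathbb{D}} (F-F_n) \conj{k} \, dA_\alpha$ is controlled by $\|F-F_n\|_{p,\alpha} < \delta$, one can arrange $\int_{\mathbb{D}} F_n \conj{k} \, dA_\alpha \ge 1-\delta$, and hence $\mu \le (1-\delta)^{-1}$. This gives $\|\widehat{k}\|_{\Lambda^*,2,A^q_\alpha} \le (1-\delta)^{-1}\|k\|_{\Lambda^*,2,A^q_\alpha}$, and Theorem \ref{thm:holder_bounds} then produces a H\"older constant $C_{F_n} = C((1-\delta)^{-1}\|k\|_{\Lambda^*,2,A^q_\alpha},p,\alpha,\eta)$ for $F_n$ with the same exponent $\beta$.

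Third, the triangle inequality implies that $F-F_n$ is H\"older-$\beta$ on $\overline{\mathbb{D}}$ with constant at most $C_F + C_{F_n} = C$, which is exactly the constant appearing in the statement. Combining this with the hypothesis $\|F-F_n\|_{p,\alpha} < \delta$ and applying Theorem \ref{thm:holder_bergman_uniform_bound} to the function $F-F_n$ yields $\|F-F_n\|_\infty < \epsilon(\delta;C,\beta,p,\alpha)$, as claimed. I expect the main obstacle to be the bookkeeping in the second step, namely making the estimate $\int_{\mathbb{D}} F_n \conj{k} \, dA_\alpha \ge 1-\delta$ explicit from $\int_{\mathbb{D}} F \conj{k} \, dA_\alpha > 1$, which may tacitly assume that the slack in the strict inequality dominates $\delta \|k\|_{q,\alpha}$ so that the scaling factor $\mu$ is indeed well defined and bounded by $(1-\delta)^{-1}$.
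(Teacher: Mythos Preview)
Your proposal is correct and matches the paper's own proof essentially line for line: apply Theorem~\ref{thm:holder_bounds} to $F$, apply it to $F_n$ after rescaling $k$ by $(1-\delta)^{-1}$, add the H\"older constants, and invoke Theorem~\ref{thm:holder_bergman_uniform_bound}. The bookkeeping worry you flag is resolved by noting that $\phi(F)=\|\phi\|$, so $\phi(F_n)\ge\phi(F)-\|\phi\|\,\|F-F_n\|_{p,\alpha}>\phi(F)(1-\delta)>1-\delta$, which makes the scaling factor $\mu\le(1-\delta)^{-1}$ legitimate without any extra slack assumption.
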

\begin{proof}
This follows from Theorems 
\ref{thm:holder_bounds} and 
\ref{thm:holder_bergman_uniform_bound}. 
We use the fact that Theorem 
\ref{thm:holder_bounds} applies to 
$F_n$ if $k$ is first multiplied by 
$1/(1-\delta)$, which ensures that the condition 
$\int_{\mathbb{D}} F_n \overline{k} \, dA_\alpha \geq 1$
holds. 
\end{proof}
\section{Approximation of Extremal Functions for Even $p$}

We will give an example of approximating an 
extremal function. 
The case where $p$ is even is in some ways easier than other 
cases since then we can explicitly compute 
$\bp(|f|^p/\overline{f}) = \bp(f^{p/2} \overline{f}^{p/2-1})$ when $f$ is a polynomial, 
due to the fact that $f^{p/2}$ and $f^{p/2-1}$ are polynomials,  
so our example will involve this case. 

Define 
\[
\gamma(n,\alpha) = \|z^n\|^2_{A^2_\alpha} = 
(\alpha + 1) B(n+1, \alpha + 1) = 
\frac{\Gamma(\alpha + 2) \Gamma(n+1)}{\Gamma(n + \alpha + 2)}.
\]

Then 
\begin{equation}\label{eq:monnorm}
\bp_\alpha(z^m \overline{z}^n) = 
 \begin{cases}
\frac{\gamma(m, \alpha)}{\gamma(m-n,\alpha)} z^{m-n} 
   &\text{ if $m \geq n$} \\
0 &\text{ if $m < n$}
\end{cases}
\end{equation}
(see \cite[Section 1.1]{Zhu_Ap}). 

\begin{example} 
Let us approximate the solution to the problem of maximizing the (real part of) the functional
$f \mapsto a_0 + a_1 + a_2$, where the $a_n$ are the 
Taylor series coefficients of $f$ about $0$, and where 
$p = 4$ and $\alpha = -1/2$ (and where $f$ has 
unit norm). 
Then $k = a_0 + a_1/\gamma(1,-1/2) + a_2/\gamma(2,-1/2) = 
1 + (3/2)z + (15/8)z^2$.  

This problem is made simpler because the uniqueness of 
$F$ implies that it must have real coefficients.  Let us 
take the approximation of degree $N = 20$.  We thus seek to 
maximize $a_0 + a_1 + a_2$ subject to the constraint 
$ \|f\|^4_{4,-1/2} = \|f^2\|^2_{2,-1/2} \leq  1$, i.e.
\[
\sum_{n=0}^N \left(\sum_{m=0}^n 
   a_m a_{n-m} \right)^2 \gamma(n,-1/2) \leq 1.
\]
This is a convex optimization problem, and we are aided 
by the fact that any local maximum must be a global 
maximum, since if $F$ is any local maximum (necessarily of norm $1$) 
then a variational argument similar to the one in the proof of 
\cite[Chapter 5, Lemma 2]{D_Ap} shows that the 
$\bp_{\alpha, 20}(|F|^p/\overline{F})$ is a scalar multiple of $k$,
and thus $F$ is the extremal function 
(see \cite[p.\ 55]{Shapiro_Approx}). 
Here we let $\bp_{\alpha, 20}$ denote the orthogonal projection from 
$L^p_\alpha(\mathbb{D})$ onto the subspace of $A^p_\alpha$ of 
consisting of polynomials of degree at most $20$.  

Using Mathematica (for example) to approximate a solution 
yields a maximum functional value of $1.78785$ and 
\[
\begin{split}
F_{20} = 
0.431458 &+ 0.496144 x + 0.860246 x^2 - 0.341597 x^3 - 
0.0225992 x^4 + 
 0.110915 x^5 \\
&{}- 0.0520239 x^6 - 0.00952809 x^7 + 0.0235908 x^8  + 
\cdots + 
- 0.000599527 x^{15}
\end{split}
\]
All of the omitted terms have coefficients of less than 
$1/100$.  
If we compute 
$\widetilde{k} = \bp_\alpha(|F_{20}|^4/\overline{F_{20}})$, 
we find that it is 
\[
.559332 + 0.838998 z + 1.04875 z^2 + \cdots + 
1.55098 \cdot 10^{-9} z^{40}.
\]
All of the omitted terms have coefficients of at most
$.0001$. 
We must now find a multiple of $k$ close to 
$\widetilde{k}$.  We could find the closest one 
as an optimization 
problem, but we will choose
$\widehat{k} = .559332 k$ in order to make the first 
coefficients of $\widetilde{k}$ and $\widehat{k}$ match, 
since this is simpler and yields a result close to 
$\widetilde{k}$. 
If we now compute 
$\|\widetilde{k} - \widehat{k}\|_{A^{4/3}_{-1/2}}$, we 
see that it is about $.000018$. Theorem 
\ref{thm:ferrork} shows that
$\|F - F_{20}\|_{4,-1/2}$ is less than $.185$.   
In fact, I suspect 
that the true error is much smaller.  For example, 
$\|F_{25}-F_{20}\|_{4,-1/2} < 1.85 \cdot 10^{-5}$, so 
the true error may be closer to this order of 
magnitude. 

We find that $.559332$ times the sum of the first three 
coefficients of $F$ is bigger than $1$ by about 
$2.5 \cdot 10^{-7}$.  The second $\theta$ derivative of 
$\widehat{k}$ is most $5.034$, so 
$\|\widehat{k}\|_{\Lambda^*,2,A^{4/3}_{-1/2}}$ is at most  
$5.034$.  Thus Theorem 
\ref{thm:extinfinity_error} shows that 
$\|F - F_{20}\|_{\infty} < 5181$.  
Again, I suspect the true error is much smaller.  For 
example, $\|F_{25}-F_{20}\|_\infty < .00006$, and 
the true error may be this order of magnitude.  

It would be interesting to see if the estimates in this paper can be 
substantially improved in order to yield better estimates on 
the approximation of extremal functions in the uniform norm.  
The example above shows that the estimates in the paper are likely too 
large by a substantial margin.  However, the estimates in this paper are the 
only ones known (as far as I know) 
that allow approximation of these extremal functions 
in the uniform norm, 
and they have the advantage of being explicitly computable 
without great difficulty. 
\end{example}

\section{Non-zero Extremal Functions}
The proceeding results can be used to find explicit conditions on 
$k$ that guarantee that $F$ is non-zero.  In 
Theorem \ref{thm:ksector-fnonzero} we give one such result. 

\begin{theorem}
Let $0 < \theta < 2\pi$ and $\theta < 2\pi(p-1)$.  
Suppose that $k \in A^{q}_\alpha$ has range that is a subset of 
the sector 
$-\theta/2 < \arg z < \theta/2$, and that
\(
\|k\|_{q,\alpha} = 1 
\).
Let $F$ be the extremal function for $k$ and let
\[
C_\theta = 2 C_{p,\alpha} 
\left|\sin\left(\frac{(p-2)\theta}{4(p-1)}\right) \right|,
\] where 
$C_{p,\alpha}$ is the bound for the Bergman projection from 
$L^p_\alpha$ onto $A^p_\alpha$.  Then if $p > 2$ we have  
$\|F - k^{1/(p-1)}\|_{p,\alpha} \leq 2p^{1/p} C_\theta^{1/p}$ and 
if $p < 2$ we have 
$\|F - k^{1/(p-1)}\|_{p,\alpha} \leq 2\sqrt{2}(p-1)^{-1/2} C_\theta^{1/2}$.

\end{theorem}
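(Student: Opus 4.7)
The plan is to apply Theorem \ref{thm:ferrork} with the candidate $G := k^{1/(p-1)}$ and $\widehat{k} := k$, where the power is taken via the branch sending the sector $\{|\arg z|<\theta/2\}$ into $\{|\arg w|<\theta/(2(p-1))\}$. This reduces the problem to bounding $\|\bp_\alpha(|G|^p/\overline{G})-k\|_{q,\alpha}$; once this quantity is shown to be at most $C_\theta$, the two conclusions of the theorem fall out of the two cases of Theorem \ref{thm:ferrork} with $\delta=C_\theta$.

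First I would verify that $G\in A^p_\alpha$ has unit norm: since $|G|^p=|k|^{p/(p-1)}=|k|^q$ one has $\|G\|_{p,\alpha}^p=\|k\|_{q,\alpha}^q=1$. The heart of the computation is the pointwise identity
\[
\frac{|G|^p}{\overline{G}}=|k|\,e^{i\arg(k)/(p-1)}=k\,e^{i\arg(k)(2-p)/(p-1)},
\]
obtained from $G=|k|^{1/(p-1)}e^{i\arg(k)/(p-1)}$. Subtracting $k$ and using $|e^{i\phi}-1|=2|\sin(\phi/2)|$ gives
\[
\Bigl|\frac{|G|^p}{\overline{G}}-k\Bigr|=2|k|\,\bigl|\sin\bigl((p-2)\arg(k)/(2(p-1))\bigr)\bigr|.
\]

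Next, since $|\arg k|\le\theta/2$, the argument of the sine has absolute value at most $|p-2|\theta/(4(p-1))$. For $p>2$ this is bounded by $\theta/4<\pi/2$, while for $1<p<2$ the hypothesis $\theta<2\pi(p-1)$ gives $\theta(2-p)/(4(p-1))<\pi(2-p)/2<\pi/2$. On $[-\pi/2,\pi/2]$ the function $|\sin|$ is nondecreasing in $|x|$, so the pointwise bound is at most $2|k||\sin((p-2)\theta/(4(p-1)))|$. Integrating against $dA_\alpha$ and using $\|k\|_{q,\alpha}=1$, then applying the Bergman projection (which fixes the analytic function $k$) together with its $L^q_\alpha$ operator bound $C_{p,\alpha}$, yields $\|\bp_\alpha(|G|^p/\overline{G})-k\|_{q,\alpha}\le C_\theta$. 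If $C_\theta<1$, Theorem \ref{thm:ferrork} applied with $\delta=C_\theta$ produces the two stated estimates; if $C_\theta\ge 1$, both bounds hold trivially from $\|F-G\|_{p,\alpha}\le\|F\|_{p,\alpha}+\|G\|_{p,\alpha}=2$.

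The principal technical point is the monotonicity check for the sine: the hypothesis $\theta<2\pi(p-1)$ (which is binding for $p$ close to $1$) is exactly what forces $|\arg(k)(p-2)/(2(p-1))|<\pi/2$, the region on which $|\sin|$ is nondecreasing, so that the pointwise $|\sin|$ can be replaced by its value at the maximal argument; this requires separate inspection of the regimes $p>2$ and $1<p<2$. Minor bookkeeping is needed for interpreting $|G|^p/\overline{G}$ as $0$ at zeros of $G$ and for confirming that it lies in $L^q_\alpha$; both are immediate from $|G|^{p-1}=|k|$.
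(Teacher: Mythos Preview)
Your proof is correct and follows essentially the same route as the paper: define $G=k^{1/(p-1)}$, compute $|G|^{p-1}\sgn G=|k|e^{i\arg(k)/(p-1)}$, bound the pointwise difference from $k$ by $2|k|\bigl|\sin\bigl((p-2)\theta/(4(p-1))\bigr)\bigr|$, pass to the $L^q_\alpha$ norm, apply the projection bound, and invoke Theorem~\ref{thm:ferrork}. You are in fact more careful than the paper on two points: you justify the monotonicity step for $|\sin|$ using the hypothesis $\theta<2\pi(p-1)$ (which the paper leaves implicit), and you dispose of the case $C_\theta\ge 1$ needed for Theorem~\ref{thm:ferrork} to apply.
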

\begin{proof}
Note that 
$G = k^{1/(p-1)}$ is well defined, where we take the 
branch with $1^{1/(p-1)} = 1$.  
Notice that $|G|^{p-1}\sgn G = |k| e^{\arg(k)/(p-1)}$.  
Thus 
\[
|k - |G|^{p-1}\sgn G| = 
 |k|\left|e^{i\arg(k)} - e^{i\arg(k)/(p-1)}\right|
\leq 2|k| 
 \left|\sin\left(\frac{(p-2)\theta}{2(p-1)}\right) \right|
\]
and therefore
\[
\|k - |G|^{p-1}\sgn G\|_{p,\alpha} 
\leq 2\|k\|_{p,\alpha} 
 \left|\sin\left(\frac{(p-2)\theta}{2(p-1)}\right) \right|.
\]
Let $C_{p,\alpha}$ be the bound for the Bergman projection from 
$L^p_\alpha$ onto $A^p_\alpha$.  Then 
\[
\|k - \bp_\alpha(|G|^{p-1} \sgn G)\|_{q,\alpha} \leq 
2 C_{p,\alpha} 
\left|\sin\left(\frac{(p-2)\theta}{4(p-1)}\right) \right|
\|k\|_{q,\alpha}.
\]
since $\bp_\alpha(k) = k$. 
The result now follows from Theorem \ref{thm:ferrork}.
\end{proof}

\begin{theorem}\label{thm:ksector-fnonzero}
Let $0 < d < 1$ and 
$2 \le p < \infty$ and 
$-1 < \alpha < 0$. 
If $1 < p < 2$ also suppose $-1 < \alpha < p-2$.
Let
$\|k\|_{q,\alpha} = 1$ and suppose that 
$\|k\|_{\Lambda^*,2,A^p_\alpha} < B$. 
Then there exists a $\theta > 0$ depending only on 
$d$, $B$, $p$, and $\alpha$ such that 
if the range of $k$ is a subset of 
$\{z: -\theta/2 < \arg z < \theta/ 2 \text{ and } |z| > d\}$ then 
$F$ is non-zero. 
\end{theorem}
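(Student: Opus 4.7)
The plan is to combine the previous theorem, which says that $F$ is $A^p_\alpha$-close to the non-vanishing function $k^{1/(p-1)}$ when $\theta$ is small, with the uniform H\"older regularity of $F$ coming from Theorem \ref{thm:holder_bounds}, in order to rule out any zero of $F$.

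First I would apply the previous theorem to obtain
\[
\|F - k^{1/(p-1)}\|_{p,\alpha} \leq \epsilon(\theta),
\]
where $\epsilon(\theta) \to 0$ as $\theta \to 0$; explicitly $\epsilon(\theta)$ is the bound provided by that theorem, built from $\sin((p-2)\theta/(4(p-1)))$, and it depends only on $p$ and $\alpha$ (with $\|k\|_{q,\alpha} = 1$).

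Next, I would invoke Theorem \ref{thm:holder_bounds} (after a small renormalization: replace $k$ by $k/\|\phi\|$ so that $\int_\mathbb{D} F\overline{k}\, dA_\alpha = 1$; a positive lower bound for $\|\phi\|$, and hence a bound on the rescaled $\Lambda^*$ norm in terms of $B$, $d$, $p$, and $\alpha$, is obtained by testing $\phi$ on the normalized kernel) to conclude that $F$ extends to a H\"older continuous function on $\overline{\mathbb{D}}$ with some exponent $\beta = \beta(p,\alpha,\eta) > 0$ and a H\"older constant $C_F$ depending only on $B$, $d$, $p$, $\alpha$ (and $\eta$ for $p < 2$).

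The decisive step is a contradiction argument. Suppose $F(z_0) = 0$ for some $z_0 \in \mathbb{D}$. By H\"older continuity, $|F(z)| \leq C_F |z-z_0|^\beta$, while $|k^{1/(p-1)}(z)| \geq d^{1/(p-1)}$ for every $z \in \mathbb{D}$. Setting $r_0 = (d^{1/(p-1)}/(2C_F))^{1/\beta}$, on $B(z_0,r_0) \cap \mathbb{D}$ we have
\[
|F(z) - k^{1/(p-1)}(z)| \geq d^{1/(p-1)} - C_F|z-z_0|^\beta \geq \tfrac12 d^{1/(p-1)}.
\]
Since $dA_\alpha(B(z_0,r_0) \cap \mathbb{D})$ is bounded below by a quantity depending only on $r_0$ and $\alpha$, integration yields
\[
\|F - k^{1/(p-1)}\|_{p,\alpha} \geq c(d, B, p, \alpha) > 0.
\]

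Finally, choose $\theta$ small enough that $\epsilon(\theta) < c(d,B,p,\alpha)$; combined with the first step this is a contradiction, so no such $z_0$ exists and $F$ is non-vanishing on $\mathbb{D}$. The main obstacle is the normalization in the second step: Theorem \ref{thm:holder_bounds} assumes $\int F\overline{k}\,dA_\alpha \geq 1$, whereas $\|k\|_{q,\alpha}=1$ only gives an upper bound on $\|\phi\|$. Extracting a uniform lower bound on $\|\phi\|$, so that the rescaled $\Lambda^*$ norm of $k/\|\phi\|$ remains controlled by the input data $(d,B,p,\alpha)$, is the delicate bookkeeping that makes $C_F$ genuinely independent of $\theta$ and thus allows the final choice of $\theta$ to be made.
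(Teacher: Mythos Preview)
Your argument is correct and follows the same overall strategy as the paper, but the execution differs in one place. The paper establishes H\"older continuity of \emph{both} $F$ (via Theorem \ref{thm:holder_bounds}) and $k^{1/(p-1)}$ (from the regularity of $k$ combined with $|k|>d$), so that $F-k^{1/(p-1)}$ is itself H\"older with constant $C+D$; it then invokes Theorem \ref{thm:holder_bergman_uniform_bound} as a black box on this difference to turn the $A^p_\alpha$ bound from the previous theorem into $\|F-k^{1/(p-1)}\|_\infty<d^{1/(p-1)}$, which immediately forbids zeros of $F$. You instead use only the H\"older bound on $F$ together with the pointwise lower bound $|k^{1/(p-1)}|\ge d^{1/(p-1)}$, and carry out the ball--integral estimate by hand at a putative zero of $F$; this is effectively reproving the relevant piece of Theorem \ref{thm:holder_bergman_uniform_bound} in situ. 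Your route is slightly more elementary (it never needs the H\"older continuity of $k^{1/(p-1)}$), while the paper's route is more modular and yields the stronger conclusion that $F$ is bounded away from $0$ on all of $\overline{\mathbb{D}}$.

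Your flag about the normalization in Theorem \ref{thm:holder_bounds} is well taken: the paper glosses over this. A clean way to resolve it is to test $\phi$ on the constant function $1$ (which has unit $A^p_\alpha$ norm); since $dA_\alpha$ is a rotation-invariant probability measure and $k$ is analytic, $\phi(1)=\overline{k(0)}$, so $\|\phi\|\ge |k(0)|>d$. Rescaling $k$ by $1/\|\phi\|$ therefore inflates the $\Lambda^*$ bound by at most a factor $1/d$, and the H\"older constant $C_F$ indeed depends only on $d$, $B$, $p$, $\alpha$ (and $\eta$).
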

\begin{proof}
Let $\theta > 0$ be given.  This $\theta$ will make the conclusion of the
theorem true if 
the assumptions show that $\|F - k^{1/(p-1)}\|_\infty < d^{1/(p-1)}$.
Let $\lambda = d^{1/(p-1)}$.   
For $p < 2$ choose $0< \eta <1 - 2/p - \alpha/p$ and
let $\beta = \beta(p,\alpha, \eta)$; otherwise let 
$\beta = \beta(p, \alpha)$. 

Note that 
by \cite[Theorems 3.1 and 1.2]{tjf:holder-bergman} and 
\cite[Theorems 5.9 and 5.1]{D_Hp}, we have
$k \in C^{2-2/p-\alpha/p} \subset C^{\beta}$ 
with H\"{o}lder constant depending only on $B$, $p$, and 
$\alpha$.  
Since $k$ is bounded away from $0$, we also have that 
$k^{1/(p-1)} \in C^{\beta}$ with constant depending only on 
$B$, $d$, $p$, and $\alpha$. 
Let $D$ be the smallest constant such that 
$|k(z)^{1/(p-1)} - k(w)^{1/(p-1)}| \leq D |z-w|^\beta$ .

By Theorem \ref{thm:holder_bergman_uniform_bound} 
we will be done if we can show that 
\[
\|F - k^{1/(p-1)}\|_{p,\alpha} < \delta 
\] 
where
\( \delta = \delta(\lambda, C + D, \beta, p, \alpha),
\)
where $C = C(B, p,\alpha,\eta)$.  But by the previous theorem, this is 
true if $\theta$ is small enough.  
\end{proof}
Notice that, given $B$, $d$, $\epsilon$, $p$ and $\alpha$, we could if we wish 
calculate an explicit value for $\theta$. 

\providecommand{\bysame}{\leavevmode\hbox to3em{\hrulefill}\thinspace}
\providecommand{\MR}{\relax\ifhmode\unskip\space\fi MR }
\providecommand{\MRhref}[2]{%
  \href{http://www.ams.org/mathscinet-getitem?mr=#1}{#2}
}
\providecommand{\href}[2]{#2}

\end{document}